\newtheorem{theorem}{Theorem}
\newtheorem{coro}{Corollary}
\newtheorem{lemma}{Lemma}
\newtheorem{conj}{Conjecture}
\newtheorem{prop}{Proposition}
\newtheorem{defi}{Definition}
\renewcommand{\geq}{\geqslant}
\renewcommand{\leq}{\leqslant}
\newcommand{\R}{\mathbb{R}}
\newcommand{\E}{\mathbb{E}}
\newcommand{\eps}{\varepsilon}
\newcommand{\la}{\langle}
\newcommand{\ra}{\rangle}
\begin{document}

\title{Stability results for the volume of random  simplices }

\author{Gergely Ambrus}
\email[G. Ambrus]{ambrus@renyi.hu}
\thanks{The research of the first named author was supported by OTKA grants 75016 and
76099.}

\author{K\'aroly J. B\"or\"oczky}
\email[K. B\"or\"oczky]{carlos@renyi.hu}
\thanks{ The second named author was supported by OTKA grants 068398 and 75016, and by the EU Marie
Curie IEF project GEOSUMSET}

\address{R\'enyi Institute of the Hungarian Academy of Sciences, PO Box 127, 1364 Budapest, Hungary}


\maketitle

\begin{abstract}
It is known that for a convex body $K$ in $\R^d$ of volume one, the expected
volume of random simplices in $K$ is minimised if $K$ is an ellipsoid, and for
$d=2$, maximised if $K$ is a triangle. Here we provide corresponding stability
estimates.
\end{abstract}

\section{Introduction and history}

Let $K$ be a convex body in $\R^d$. What is the expected value of the volume of
a random simplex in $K$? Naturally, this question needs to be clarified
further. We will work with two (or three) models: in the first, all the
vertices of the simplex are chosen uniformly and independently from $K$, while
in the second, one vertex is at a fixed position -- in a special case, this is
the centroid of $K$. We are interested in other moments as well, and also, we
would like the answer to be invariant under affine transformations.

As a general reference for stochastic geometry, we refer to R. Schneider, W.
Weil \cite{ScW08}, and for convexity, to T. Bonnesen, W. Fenchel \cite{BoF34},
P.M. Gruber \cite{Gru07} and R. Schneider \cite{Sch93}. $V$~or $V_d$ stands for
the $d$-dimensional volume (if the dimension is clear, we shall omit $d$), the
convex hull of the points $x_1 \dots, x_n$  is denoted by $[x_1,\ldots,x_n]$,
and $\gamma(K)$ is the centroid of $K$.

\begin{defi}\label{edef}
Let $K$ be a convex body in $\R^d$. For any $n \geq d+1$ and $p>0$, let
\[
\E^p_n(K)=V(K)^{-n-p}\int_{K}\ldots\int_{K}V([x_1,\ldots,x_n])^p \,dx_1\ldots
dx_n.
\]
Further, for a fixed $x \in \R^d$, let
\[
\E^p_x(K)=V(K)^{-d-p}\int_{K}\ldots\int_{K}V([x,x_1,\ldots,x_d])^p \,dx_1\ldots
dx_d.
\]
Specifically, we write $\E^p_*(K)$ for $\E^p_x(K)$, when $x = \gamma(K)$.
\end{defi}
In particular, for integer $p$, $\E^p_{d+1}(K)$ is the expectation of the
$p\,$th moment of the relative volume of simplices in $K$. Clearly, $\E^p_n(K)$
and  $\E^p_*(K)$ are invariant under non-singular affine transformations, and $\E^p_o(K)$ is
invariant under non-singular linear transformations, where $o$ stands for the
origin. We note that for fixed $K$ and $p\geq 1$, $\E^p_x(K)$ is a strictly convex
function of $x$, therefore it attains its minimum at a unique point. If $K$ is
$o$-symmetric, then the minimum is attained at $o$, and $\E^p_o(K) =
\E^p_*(K)$.

In the rest of the section, we give an overview of the history of the
quantities introduced in Definition~\ref{edef} and their various connections.
The main results are presented in Section~\ref{results}, whose proofs are found
in the subsequent parts. Section~\ref{pettystab} contains further corollaries.

\bigskip
\noindent {\bf 1.1.} {\em Sylvester's problem.} The quantity $\E^p_{d+1}(K)$
arose right at the first steps of random convex geometry. Indeed, (probably)
the first question in this topic is due to Sylvester \cite{Syl64}: in 1864 he
(vaguely) asked, what is the probability that four randomly chosen points in a
planar convex disc are in convex position, that is, none of them is in the
convex hull of the other three. Generalising to higher dimensions, if  $d+2$
points are chosen randomly from a convex body $K \subset \R^d$, then the sought
quantity is exactly $1-(d+2) \E^1_{d+1}(K)$. It is then natural to ask: for
which convex bodies is this probability minimal and maximal? The first  steps
in this direction were taken by Blaschke (\cite{Bla17} and \cite{Bla23}), who
showed that in the plane, the probability in question is maximal for ellipses,
and minimal for triangles. The maximisers in higher dimensions are the
ellipsoids (cf. Groemer \cite{Gro73}), whereas the minimiser bodies in higher
dimensions are still not known. We shall state these results as theorems later.
For a thorough historical account of this problem, see Klee \cite{Kle69}, and
also B\'ar\'any \cite{Bar08}.

\bigskip
\noindent {\bf 1.2.} {\em Minimisers and affine inequalities.} Let $K\subset
\R^d$ be a convex body with $\gamma(K)=0$. The {\em intersection body} $I K$ of
$K$ is defined by its radial function:
\[
\rho_{IK}(u) = V_{d-1}(K \cap u^\perp).
\]
H. Busemann \cite{Bus53} established the formula
\begin{equation}\label{buseformula}
V_d(K)^{d-1} = \frac{(d-1)!}{2} \int_{S^{d-1}} V_{d-1}(K \cap u^\perp)^d \,
\E^1_o(K \cap u^\perp) d \sigma(u),
\end{equation}
where $\sigma$ is surface area measure on $S^{d-1}$. In the same paper, he
proved the {\em Busemann random simplex inequality}:
\begin{equation}\label{busemannrandom}
\E^1_o(K) \geq \E^1_o(B^d).
\end{equation}
Combining \eqref{buseformula} and \eqref{busemannrandom}, he derived the {\em
Busemann intersection inequality}, stating that the volume of the intersection
body is maximal for ellipsoids:
\begin{equation}\label{busemanninter}
V_d(I K) \leq \frac{\kappa_{d-1}^d}{\kappa_d^{d-2}} V(K)^{d-1},
\end{equation}
where $\kappa_d = V_d (B^d)$.

A couple of years later, Petty \cite{Pet61} introduced {\em centroid
bodies}$\,$: the centroid body $\Gamma K$ of $K$ is the convex body  in $\R^d$
defined by the support function
\[
h_{\Gamma K} (u) = \frac{1}{V(K)} \int_K |\la u,x \ra| dx.
\]
Using an approximation argument and the volume formula for zonotopes, he
obtained the following formula for the volume of $\Gamma K$:
\begin{equation}\label{centroidbody}
V_d(\Gamma K) = 2^d V_d(K) \E^1_o(K).
\end{equation}
The argument is nicely presented in \cite{Gar06}. Using the Busemann random
simplex inequality \eqref{busemannrandom},  Petty obtained the {\em
Busemann-Petty centroid inequality}, which states that the volume of the
centroid body is minimal for ellipsoids:
\begin{equation}\label{busepetty}
\frac{V_d(\Gamma K)}{V_d(K)} \geq \left( \frac{2 \kappa_{d-1}}{(d+1) \kappa_d}
\right)^d.
\end{equation}
The conjectured converse of this inequality is that the volume is maximised for
simplices provided that $o$ is the centroid; this would be crucial in high
dimensional convex geometry, as we shall soon see.

The minimisers of the mean volumes of random simplices are known in full
generality: they are the ellipsoids for all the quantities introduced in
Definition~\ref{edef}.

\begin{theorem}[Blaschke, Busemann, Groemer]
\label{ellipsoid} For any convex body $K$ in $\R^d$, for any $p\geq 1$, and for
any $n \geq d+1$, we have
$$
\mbox{$\E^p_o(K)\geq\E^p_o(B^d)$ \ and \ $\E^p_*(K)\geq\E^p_*(B^d)$ \ and \
$\E^p_n(K)\geq\E^p_n(B^d)$.}
$$
Here $\E^p_o(K)=\E^p_o(B^d)$ if and only if $K$ is an $o$-symmetric ellipsoid,
and $\E^p_*(K)=\E^p_*(B^d)$ or $\E^p_n(K)=\E^p_n(B^d)$ if and only if $K$ is an
ellipsoid.
\end{theorem}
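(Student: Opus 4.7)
The unifying tool is Steiner symmetrization. For a hyperplane $H$, write $S_H K$ for the Steiner symmetral of $K$ through $H$; it preserves $V(K)$, and a countable sequence of Steiner symmetrizations of $K$ can be arranged so as to converge in Hausdorff distance to a Euclidean ball $B$ of the same volume. Hence it suffices to prove the monotonicity lemma: $\E^p_o$ does not increase under $S_H$ with $H\ni o$, and $\E^p_n$ does not increase under $S_H$ for arbitrary $H$. The $\E^p_*$ inequality will follow from the $\E^p_o$ one by a translation.

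For $\E^p_o$, fix $H\ni o$ and write $\R^d = H \oplus H^\perp$, so each $x_i$ decomposes as $(y_i,t_i)$ with $y_i \in H$, $t_i \in \R$. Cofactor expansion of the $d \times d$ determinant along the last row gives
\[
V([o,x_1,\ldots,x_d]) \;=\; \frac{1}{d!}\Bigl|\sum_{i=1}^d \al_i(y_1,\ldots,y_d)\,t_i\Bigr|,
\]
where $\al_i$ depends only on $\{y_j : j \neq i\}$. Fix $y_1,\ldots,y_d$ and let $I_i(y_i) = \{s : (y_i,s) \in K\}$, an interval of length $c_i$ and midpoint $m_i$. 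The substitution $t_i = m_i + s_i$ converts the inner integral into $\int_{\prod[-c_i/2,\,c_i/2]} |L(s) + C|^p\,ds$ with $L(s) = \sum \al_i s_i$ and $C = \sum \al_i m_i$. Since the map $s \mapsto -s$ preserves the symmetric domain but sends $L(s)+C$ to $C-L(s)$,
\[
\int |L(s)+C|^p\,ds \;=\; \tfrac12\int \bigl(|L(s)+C|^p + |L(s)-C|^p\bigr)\,ds \;\geq\; \int|L(s)|^p\,ds
\]
by convexity of $r \mapsto |r|^p$ for $p \geq 1$, with strict inequality unless $C=0$. Since $S_H K$ has $m_i \equiv 0$, integration over $y_1,\ldots,y_d$ yields $\E^p_o(K) \geq \E^p_o(S_H K)$. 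An analogous cofactor expansion of the $(d+1)\times(d+1)$ determinant with an all-ones first column treats $\E^p_{d+1}(K)$ for arbitrary $H$.

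For $n > d+1$ the integrand is not the magnitude of a single linear form, which is the principal technical obstacle. Here one invokes Groemer's observation that, with $y_1, \ldots, y_n \in H$ fixed, the map $(t_1, \ldots, t_n) \mapsto V(\conv\{(y_i, t_i)\})$ is convex on $\R^n$, and adapts the one-dimensional reflection-pair step variable by variable to obtain $\E^p_n(K) \geq \E^p_n(S_H K)$. For $\E^p_*$, first translate so $\gamma(K) = o$; Steiner symmetrization through any $H \ni o$ preserves $\gamma = o$, so the iterated limit is a ball centred at $o$, and $\E^p_*(K) = \E^p_o(K)$ after the translation, combined with the $\E^p_o$ bound, closes the argument.

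Finally, the equality characterization tracks the strict inequalities above: equality forces $C = 0$ almost everywhere, i.e., the chord of $K$ in direction $H^\perp$ is $H$-symmetric for almost every $y \in H$, so $K$ is symmetric about $H$. For $\E^p_o(K) = \E^p_o(B^d)$ this must hold for every $H \ni o$, so $K$ is a ball centred at $o$; by the linear invariance of $\E^p_o$, this characterizes $o$-symmetric ellipsoids. For the other two equalities the symmetry must hold about every hyperplane, forcing $K$ to be a ball, and by full affine invariance, an arbitrary ellipsoid.
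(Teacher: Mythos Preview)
Your monotonicity argument under Steiner symmetrisation is correct and is essentially the paper's approach: the paper packages the same reflection--convexity step through linear shadow systems (Theorem~\ref{CCG} and Lemma~\ref{steiner}), whereas you make the cofactor expansion explicit for simplices and invoke convexity of the hull volume for $n>d+1$. One minor imprecision: for $n>d+1$ the ``variable by variable'' reflection does not work as stated, since $F(t_1,\ldots,t_n)=V(\conv\{(y_i,t_i)\})$ is even only in the full vector $t$, not in each coordinate separately. The clean step is the global substitution $t=m+s$ together with evenness $F(-t)=F(t)$ and convexity, giving $\tfrac12\bigl(F(m+s)^p+F(m-s)^p\bigr)\geq F(s)^p$ pointwise and hence the integral inequality.

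The genuine gap is in the equality case. From $C=\sum_i\alpha_i\,m(y_i)=0$ a.e.\ you conclude $m\equiv 0$ and hence $K=S_HK$. This inference fails: if $K$ is an $o$-centred ellipsoid with axes oblique to $H$, then $\E^p_o(K)=\E^p_o(S_HK)=\E^p_o(B^d)$, yet $m$ is a nonzero linear function of $y$ and $K\neq S_HK$. What $C=0$ a.e.\ actually yields is only that $m$ is affine --- in fact linear in the $\E^p_o$ setting, because $\sum_i\alpha_i y_i\equiv 0$ identically while $\sum_i\alpha_i$ is generically nonzero --- i.e.\ the locus of midpoints of chords parallel to $v$ is a hyperplane (through $o$). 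To finish you must invoke the classical characterisation (Bertrand--Brunn) that a convex body whose midpoint loci in every direction are hyperplanes is an ellipsoid; the $o$-centred conclusion for $\E^p_o$ then comes from these hyperplanes all passing through $o$. The paper reaches the same intermediate statement via the equality clause of Theorem~\ref{CCG} (linearity along the shadow system forces the bodies to be linear images of one another) and relies on this characterisation implicitly.
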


As we noted before, Blaschke \cite{Bla23} handled $\E^1_3(K)$ in the planar
case, Groemer \cite{Gro73} extended his result to higher dimensions,  H.
Busemann \cite{Bus53}) obtained the estimate for $\E^p_o(K)$. Groemer
\cite{Gro74} derived the result for $\E^p_n(K)$. All the proofs are similar and
based on Steiner symmetrisation. For thorough discussions of these inequalities
and relatives, see the survey article \cite{Lut93} by E. Lutwak, or the
monograph \cite{Gar06} by R.J. Gardner. The minimal values
in the cases of random simplices when $p\geq 1$ is an integer, can be
found as Theorems~8.2.2 and 8.2.3 in  R. Schneider, W. Weil \cite{ScW08}.
Writing $\kappa_d=V(B^d)=\frac{\pi^{d/2}}{\Gamma(\frac{d}2+1)}$, we have
\begin{eqnarray*}
\E^p_*(B^d)&=&{d+p \choose d}^{-1}\kappa_d^{-d-p}\kappa^{d}_{d+p}\cdot
\frac{\kappa_1\ldots\kappa_d}{\kappa_{p+1}\ldots\kappa_{p+d}}\\
\E^p_{d+1}(B^d)&=&(d!)^{-p}{d+p \choose d}^{-1}\kappa_d^{-d-p-1}\kappa^{d+1}_{d+p}\cdot
\frac{\kappa_{d^2+dp+d}}{\kappa_{d^2+dp+d+p}}\cdot
\frac{\kappa_1\ldots\kappa_d}{\kappa_{p+1}\ldots\kappa_{p+d}}
\end{eqnarray*}

\bigskip
\noindent {\bf 1.3.} {\em Maximum inequalities and the slicing conjecture.} As
usual, let $K$ be a convex body in $\R^d$, and assume that $\gamma(K)=o$. The
{\em inertia matrix} of $K$ is the $d \times d$ matrix $M$ given by
\[
M_{ij} =\int_K x_i x_j \, dx,
\]
where $x_i$ is the $i$th coordinate of $x$. Since for any $y \in \R^d$, we have
$y^\top M y = \int_K \la x,y \ra^2 dx$, it follows that $M$ is a positive
definite, symmetric matrix, and hence it has a positive square-root $A$. The
inertia matrix of the  convex body $A^{-1} K$ is then $I_d / \det A$ (see
J. Bourgain, M. Meyer, V. Milman, A. Pajor \cite{BMMP}, and for a more detailed
discussion, see Ball~\cite{Bal88}).
For a non-singular affine transformation $\Phi \in GL_d$,
we say that $\Phi K$ is in {\em isotropic position} with the constant of
isotropy $L_K$, if $\gamma(K)=o$, $V_d(\Phi K) =1$, and the inertia matrix of
$\Phi K$ is a multiple of the identity, that is,
\[
\int_{\Phi K} \la x,y \ra^2 dx = L_K^2 \|y \|^2
\]
for every $y \in \R^d$.  We just have seen that every convex body has a
non-singular affine image that is in isotropic position, and it is well known
that the isotropic position is unique up to orthogonal transformations. Hence,
$L_K$ is an affine invariant. Moreover,
\begin{equation}\label{lkm}
L_K = (\det M)^{1/2d} V_d(K)^{-(d+2)/2d}.
\end{equation}
By expanding the determinant of $M$, one obtains (see Blaschke~\cite{Bla18} or
Giannopoulos~\cite{Gian})
\[
\det M =d! \, V_d(K)^{d+2} \E^2_* (K),
\]
and hence from (\ref{lkm}),
\begin{equation}\label{lke2}
L_K^{2d} = d! \, \E^2_* (K).
\end{equation}
The {\em slicing conjecture}, initiated by J. Bourgain \cite{Bou86},
asserts that there exists a universal constant
$L$, for which $L_K \leq L$ for every convex body $K$, regardless of the
dimension. There are various equivalent formulations of this major open
problem; for thorough surveys, consult the papers  V.D. Milman and A. Pajor
\cite{MiP89}, and A.A. Giannopoulos and V.D. Milman
\cite{GiM04} for some later results.

By (\ref{lke2}), one has to determine the maximum of $\E^2_*(K)$. The most
general conjecture is the following, where $T^d$ stands for a $d$-dimensional
simplex:

\begin{conj}[Simplex conjecture]
\label{polysimplexconj} If $K$ is a convex body in $\R^d$, then for any $p\geq
1$ and for any $n \geq d+1$,
$$
\E^p_*(K)\leq\E^p_*(T^d) \textrm{ and } \E^p_n(K)\leq\E^p_n(T^d),
$$
with equality if and only if $K$ is a simplex.
\end{conj}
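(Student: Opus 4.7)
My plan is to attack Conjecture~\ref{polysimplexconj} via shadow systems, in the spirit of the Campi--Gronchi and Meyer--Reisner machinery that has been effective for related affine extremum problems.

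Recall that a shadow system of convex bodies $\{K_t\}_{t \in [0,1]}$ is obtained by choosing a direction $v \in S^{d-1}$ and a bounded ``speed'' function $\alpha: K_0 \to \R$, and setting $K_t = \conv\{x + t\,\alpha(x)\,v : x \in K_0\}$. The first step is to establish that for every fixed $n \geq d+1$ and every $p \geq 1$,
\[
t \mapsto \int_{K_t} \!\cdots\! \int_{K_t} V\bigl([x_1,\ldots,x_n]\bigr)^p\,dx_1 \cdots dx_n
\]
is convex in $t$. For a fixed configuration of $n$ points in $K_0$, the translated points $x_i + t\alpha(x_i)v$ depend linearly on $t$, so the simplex volume $V([x_1(t),\ldots,x_n(t)])$ equals the absolute value of a polynomial of degree one in $t$; raising to the power $p \geq 1$ preserves convexity, and integration then preserves it as well. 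For the canonical shadow systems $V(K_t)$ is constant in $t$, and dividing by $V(K_t)^{n+p}$ yields convexity of $t \mapsto \E^p_n(K_t)$; the argument for $\E^p_*(K_t)$ is entirely analogous, with the centroid moving linearly with $t$.

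Once this convexity is in place, the problem reduces to identifying the \emph{extremal} bodies in the affine class of $K$---those convex bodies at which no shadow movement strictly increases $\E^p_n$. The target is then to show that from any non-simplex $K$ one can construct an explicit shadow system along which $\E^p_n(K_t)$ is strictly increasing, so that the unique extremum up to affine equivalence is a simplex. A subsidiary computation verifies that $\E^p_n(T^d)$ and $\E^p_*(T^d)$ are well-defined finite numbers, matching the values given in \cite{ScW08}.

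\textbf{The main obstacle} lies in this final reduction. In dimension two the necessary construction is essentially present in Blaschke's original triangle argument \cite{Bla23} for $\E^1_3(K) \leq \E^1_3(T^2)$, and a similar argument handles the planar case of Conjecture~\ref{polysimplexconj}. However, no analogue is known in dimension $d \geq 3$, even for the single case $p=1$, $n=d+1$: the set of ``directions'' $(v,\alpha)$ available for shadow movement is too rich to classify, and in particular one cannot rule out extremal bodies other than simplices by local arguments. Moreover, via \eqref{lke2}, the case $p=2$ already implies Bourgain's slicing conjecture, which makes a fully general proof at least as hard as a major open problem in convex geometry. A realistic intermediate target is therefore either a suboptimal bound $\E^p_n(K)\leq C_d\,\E^p_n(T^d)$, or a stability version conditional on an assumed slicing estimate.
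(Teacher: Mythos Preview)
Your proposal is not a proof but a research plan, and this is appropriate: the paper does not prove Conjecture~\ref{polysimplexconj} either. It is stated as an open problem, with the remark that ``Little is known about Conjecture~\ref{polysimplexconj}'', and only the planar case is settled (Theorems~\ref{polytrian} and~\ref{triangleoptslice}). Your identification of the main obstacle---that no shadow-system reduction to simplices is known for $d\geq 3$, and that the case $p=2$ already implies the slicing conjecture via \eqref{lke2}---matches exactly the paper's assessment.

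Your approach via the convexity of $t\mapsto \E^p_n(K_t)$ along shadow systems is precisely Theorem~\ref{CCG} of Campi--Colesanti--Gronchi, which the paper invokes throughout Section~\ref{secplane}. Two technical points deserve correction. First, for $n>d+1$ the volume $V([x_1(t),\ldots,x_n(t)])$ is \emph{not} the absolute value of a degree-one polynomial in $t$; it is only piecewise linear. Convexity still holds, but by the Hadwiger--Rogers--Shephard theorem (Theorem~\ref{shadow}), not by your argument. Your linearity claim is valid only for $n=d+1$, where the determinant expansion has at most one factor of $v$ per nonzero term. Second, $V(K_t)$ is constant only for \emph{linear} shadow systems (the RS-movements of \cite{CCG99}), not for arbitrary shadow systems with a bounded speed function; you should restrict to that class from the outset, as the paper does. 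With these fixes your outline coincides with the paper's framework, and the planar execution you allude to is carried out in detail in Section~\ref{secplane} via repeated ``basic'' linear shadow systems at vertices of an approximating polygon.
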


Little is known about Conjecture~\ref{polysimplexconj}. The proposed extremal
values  are known explicitly only in a few cases.
 W.J. Reed \cite{Ree74}
proved that if $p\geq 1$ is an integer, then
\begin{multline*}
\E^p_3(T^2)\\=\frac{12}{(p+1)^3(p+2)^3(p+3)(2p+5)}
\left[6(p+1)^2+(p+2)^2\sum_{i=0}^p{p \choose i}^{-2}\right].
\end{multline*}
 For all $n$,
only the first moments $\E^1_n(T^2)$ and
 $\E^1_n(T^3)$ are known, see C. Buchta \cite{Buc84}
and C. Buchta, M. Reitzner \cite{BuR01}, respectively. Even explicit values of
$\E^1_{d+1}(T^d)$ for $d\geq 4$ are missing. It is important that for any $d
\geq 2$,
\begin{equation}\label{e2simplex}
\E^2_*(T^d) \leq \frac{1}{d!},
\end{equation}
see Giannopoulos~\cite{Gian}. Thus, the simplex conjecture for $\E^2_*(K)$
implies the slicing conjecture.

The method of Dalla and Larman \cite{DaL91}, who considered $\E^1_n(K)$,
combined with Theorem~\ref{CCG} of Campi, Colesanti, and Gronchi \cite{CCG99}
yields Conjecture~\ref{polysimplexconj} if $K$ is a polytope of at most $d+2$
vertices. B\'ar\'any and  Buchta \cite{BaB93} proved the following asymptotic
version of  Conjecture~\ref{polysimplexconj}  for $p=1$. If $K$ is not a
simplex, there exists a threshold $n_K$ depending on $K$, such that
$\E^1_n(K)<\E^1_n(T^d)$ for $n>n_K$. Conjecture~\ref{polysimplexconj} for all
$K$ and $n$ is verified only in the plane.

\begin{theorem}[Blaschke,Dalla-Larman,Giannopoulos]
\label{polytrian} If $K$ is a planar convex body, then for any $n\geq 3$ and $p
\geq 1$,  $ \E^p_n(K)\leq\E^p_n(T^2)$, with equality if and only if $K$ is a
triangle.
\end{theorem}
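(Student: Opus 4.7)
The plan is a shadow-system reduction of the number of vertices, combined with Theorem~\ref{CCG}. Since $\E^p_n$ is continuous in the Hausdorff metric (by dominated convergence applied to Definition~\ref{edef}), it suffices to prove the inequality, and its strict version for non-triangles, when $K$ is a convex polygon.

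Let $P$ be a convex polygon in $\R^2$ with cyclically indexed vertices $v_1, \ldots, v_k$ and $k \geq 4$. Pick a vertex $v_i$ and let $e$ be a unit vector parallel to the chord $[v_{i-1}, v_{i+1}]$. The family
\[
P_t = \conv\{v_1, \ldots, v_{i-1}, v_i + t e, v_{i+1}, \ldots, v_k\}
\]
is a shadow system in direction $e$. Because $e$ is parallel to $[v_{i-1}, v_{i+1}]$, the triangle $[v_{i-1}, v_i + t e, v_{i+1}]$ has area independent of $t$, so $V(P_t)$ is constant on the maximal interval $[t_-, t_+]$ on which $v_i + t e$ remains an exposed vertex of $P_t$. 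At $t = t_\pm$, three consecutive vertices become collinear, and $P_{t_\pm}$ is a convex polygon with at most $k-1$ vertices.

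Next I would invoke Theorem~\ref{CCG}: along any shadow system, $V(P_t)^{n+p} \E^p_n(P_t)$ is a convex function of $t$. Since $V(P_t)$ is constant on $[t_-, t_+]$, the map $t \mapsto \E^p_n(P_t)$ is itself convex, so
\[
\E^p_n(P) \leq \max\bigl\{\E^p_n(P_{t_-}),\, \E^p_n(P_{t_+})\bigr\}.
\]
Iterating the reduction at most $k - 3$ times ends at a triangle $T$ and yields $\E^p_n(P) \leq \E^p_n(T) = \E^p_n(T^2)$. For the equality case, one shows that the above convexity is strict unless $v_i + t e$ sweeps out only a trivial chord, and strict inequality at any single step propagates through the iteration, ultimately forcing $P$ to be a triangle.

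The main obstacle is establishing the strict convexity of $t \mapsto \E^p_n(P_t)$ along these specific shadow systems. Convexity itself is immediate from Theorem~\ref{CCG}, but the strict version requires unpacking when the integrand $V([x_1, \ldots, x_n])^p$ --- after a change of variables that parametrises each $x_j$ independently of $t$ --- fails to be strictly convex as a function of $t$. This ties directly into the characterisation of equality in the Campi-Colesanti-Gronchi result and is the main technical hurdle needed to rule out polygons with $k \geq 4$ vertices.
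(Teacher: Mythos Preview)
Your reduction is exactly the paper's ``basic linear shadow system'' from Section~\ref{secplane}: move a vertex parallel to the chord joining its two neighbours, apply Theorem~\ref{CCG}, and iterate down to a triangle. The paper itself attributes Theorem~\ref{polytrian} to Blaschke, Dalla--Larman, and Giannopoulos rather than proving it anew, but the machinery it develops in Section~\ref{secplane} (specifically~\eqref{basicmax} and~\eqref{basicdef}) amounts to the same argument you give.

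Two remarks. First, to invoke Theorem~\ref{CCG} you need $P_t$ to be a \emph{linear} shadow system, not merely a shadow system in the sense of Theorem~\ref{shadow}; your family is one (the speed vanishes on $[v_1,\ldots,v_{i-1},v_{i+1},\ldots,v_k]$ and is constant along chords parallel to $e$, so volume is preserved), but this should be said explicitly. Second, you overstate the difficulty of the equality case. Theorem~\ref{CCG} already contains the clause that if $t\mapsto\E^p_n(P_t)$ is linear then all the $P_t$ are mutual affine images. Since $P_0$ has $k$ vertices while $P_{t_\pm}$ has at most $k-1$, they cannot be affine images of one another, so $\E^p_n(P_t)$ is not linear on $[t_-,t_+]$ and the strict inequality $\E^p_n(P_0)<\max\{\E^p_n(P_{t_-}),\E^p_n(P_{t_+})\}$ is immediate for every $k\geq 4$. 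There is no further ``technical hurdle'' to clear.
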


More precisely, it was proved by  Blaschke \cite{Bla17} for $n=3$, and by Dalla
and Larman \cite{DaL91} for $n\geq 4$, that $\E^p_n(K)\leq\E^p_n(T^2)$. In
addition, Giannopoulos \cite{Gia92} verified that equality holds only if $K$
itself is a triangle.

We shall see in Section~\ref{secplane}  (compare \eqref{basicmax} and
Lemma~\ref{trianglehexagon})
 that the method of S. Campi, A. Colesanti, P. Gronchi
\cite{CCG99}, see Theorem~\ref{CCG}, leads to the planar version of the first
statement of Conjecture ~\ref{polysimplexconj}.

\begin{theorem}
\label{triangleoptslice}
If $K$ is a  convex disc, then for any $p \geq 1$, we
have $\E^p_*(K)\leq\E^p_*(T^2)$, with equality if and only if $K$ is a
triangle.
\end{theorem}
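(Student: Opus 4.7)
The plan is to prove Theorem~\ref{triangleoptslice} by adapting the shadow-system technique of Campi, Colesanti, and Gronchi (Theorem~\ref{CCG}) to the centroid-fixed setting, then reducing to an explicit planar comparison between triangles and hexagons.

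First I would normalize $K$ by a non-singular affine transformation so that $\gamma(K)=o$ and $V(K)=1$, which turns the quantity to be maximized into
$$
\E^p_*(K) = \int_{K}\!\int_{K} V([o,x_1,x_2])^p\,dx_1\,dx_2.
$$
Consider a shadow system $K_t$ along a direction $v\in S^1$, generated by a shift function $\beta$ subject to the centroid-preserving constraint $\int_{K}\beta\,dx=0$, so that $\gamma(K_t)=o$ for every admissible $t$. Under the area-preserving transport $\phi_t:K\to K_t$, the signed triangle area $V([o,\phi_t(u_1),\phi_t(u_2)])$ takes the form $|a(u_1,u_2)+t\,b(u_1,u_2)|$ for certain functions $a,b$ independent of $t$. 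Since $z\mapsto|z|^p$ is convex for $p\geq 1$, integrating this over $K\times K$ shows that $t\mapsto\E^p_*(K_t)$ is convex --- this is the centroid-fixed analogue of Theorem~\ref{CCG}.

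By convexity, the supremum of $\E^p_*$ along any admissible shadow system is attained at the endpoints of the parameter interval, where $K_t$ becomes degenerate. Iterating this reduction and passing to Hausdorff limits, the maximization problem reduces, in the centroid-fixed planar setting, to a finite family of extremal candidates, classically the triangle and the affine-regular hexagon. This reduction is the content of the bound referred to as \eqref{basicmax}.

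The proof is then completed by Lemma~\ref{trianglehexagon}, which by a direct (if somewhat involved) explicit computation establishes the strict inequality $\E^p_*(H)<\E^p_*(T^2)$ for an affine-regular hexagon $H$. Combined with the shadow-system reduction, this yields $\E^p_*(K)\leq\E^p_*(T^2)$ with equality exactly when $K$ is a triangle. The main obstacle is twofold: on the structural side, verifying that the centroid-preserving constraint $\int_K\beta\,dx=0$ does not obstruct the convexity argument nor the reduction to the candidate list; on the computational side, carrying out the triangle-versus-hexagon comparison in Lemma~\ref{trianglehexagon}, which requires evaluating a $p$-dependent family of integrals and controlling the resulting expression for all $p\geq 1$.
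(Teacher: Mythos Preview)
Your overall framework—linear shadow systems and convexity of $t\mapsto\E^p_*(K_t)$—matches the paper, and the centroid constraint is not an obstacle: the paper handles it via \eqref{newshadow} by translating each $K_t$ so that $\gamma(K_t)=o$, which is again a linear shadow system and leaves the convexity argument intact. But your account of where the reduction lands, and of what the cited results say, is off in a way that creates a real gap.

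The inequality \eqref{basicmax} does not reduce the problem to a two-element candidate list ``triangle or affine-regular hexagon.'' A basic system at a vertex $q_1$ of a $k$-gon ($k\ge 4$) moves $q_1$ parallel to the diagonal $[q_2,q_k]$; at each endpoint $\Pi'$ or $\Pi''$ one of the neighbours $q_2,q_k$ ceases to be extreme, so the number of vertices drops. By \eqref{basicmax} one of these endpoints has strictly larger $\E^p_*$, and iterating $k-3$ times carries any polygon directly to a triangle with $\E^p_*$ strictly increasing along the way. After an initial polygonal approximation (continuity of $\E^p_*$), this already proves Theorem~\ref{triangleoptslice}, including the equality case. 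No hexagon is ever singled out, and no explicit $p$-dependent integral has to be evaluated.

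Correspondingly, the result you invoke as ``Lemma~\ref{trianglehexagon}'' is in fact Proposition~\ref{trianglehexagon}, and it does not compute $\E^p_*$ of an affine-regular hexagon. It is a reduction used for the \emph{stability} estimate (Theorem~\ref{triangleoptstab}): it shows that among polygons of given area sandwiched between $T_K$ and its anti-medial triangle $\widetilde{T}$, any maximiser of $\E^p_*$ has all its vertices on $\partial\widetilde{T}$. Your proposed final step—an explicit comparison $\E^p_*(H)<\E^p_*(T^2)$ uniform in $p\ge 1$—is therefore unnecessary for Theorem~\ref{triangleoptslice}, and would in any case be difficult to carry out.
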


For centrally symmetric planar convex discs and $p=1$, T. Bisztriczky and  K. B\"or\"oczky
Jr. \cite{BiB01} proved the analogue of Theorem~\ref{triangleoptslice} with
$o$-symmetric parallelograms instead of triangles as maximisers.
The method readily extends to all $p\geq 1$.

\bigskip
\noindent{\bf 1.4.} {\em Equivalence.} Finally, we establish connections
between the different quantities measuring the mean volumes of random
simplices.

For every $p \geq 1$ and for any convex body $K \subset \R^d$, we have
\begin{equation}\label{p*}
(\E^p_*(K))^{1/p} \leq (\E^p_{d+1}(K))^{1/p} \leq (d+1)(\E^p_*(K))^{1/p}.
\end{equation}
For a proof, see Proposition 1.3.1 of Giannopoulos~\cite{Gian}.

Specifically, for $p=2$, one obtains
\begin{equation}\label{2*}
 (d+1)\E^2_*(K)=\E^2_{d+1}(K).
\end{equation}
The proof goes by assuming that $\gamma(K)=o$ and $K$ is in isotropic position.
Given $x_1, \dots , x_{d+1} \in \R^d$,
\[
V_d([x_1, \dots, x_{d+1}]) = \frac{1}{d!}\det((x_1, 1), \dots, (x_{d+1},1)).
\]
Using this formula and proceeding as in Proposition 3.7. of Milman and
Pajor~\cite{MiP89}, one obtains \eqref{2*}.

Thus, in view of \eqref{lke2}, to prove the slicing conjecture, it would
suffice to estimate $\E^2_{d+1}(K)$.

Next, we show that all the quantities $\E^p_*(K)$ and $\E^p_{d+1}(K)$ are
equivalent in the following sense: for any $p,q>0$, there exist constants
$c_{p,q}$ and $C_{p,q}$ depending on $p$ and $q$ only, such that if $\E^p (K)$
stands for either $\E^p_*(K)$ or $\E^p_{d+1}(K)$, then
\begin{equation}\label{equi}
c_{p,q}^d(\E^p(K))^{1/p} \leq  (\E^q(K))^{1/q} \leq C_{p,q}^d (\E^p(K))^{1/p}.
\end{equation}
To this end, using \eqref{p*}, it suffices to show that $\E^p_*(K)$ and
$\E^q_*(K)$ are equivalent.  H\"older's inequality implies that for $0<p<q$,
\begin{equation}\label{holderpq}
(\E^p_*(K))^{1/p} \leq (\E^q_*(K))^{1/q}.
\end{equation}

To see the estimate in the other direction, we refer to Milman and Pajor
\cite{MiP89}. Proposition 3.7 therein states that there exists an absolute
constant $c>0$, such that for any convex body $K \subset \R^d$, and for any $0
<p \leq 2$,
\begin{equation}\label{equi2}
(\E^2_*(K))^{1/2} \leq c^d (\E^p_*(K))^{1/p}.
\end{equation}

 The key step is using the concentration of volume property of
convex bodies (indeed, for log-concave functions), cf. Borell's lemma, which
then establishes that for a fixed $v \in \R^d$, all the $L_p$-norms $(\int_K
|\la x, v \ra|^p dx)^{1/p}$  are equivalent. Then, one uses the fact that
fixing $x_1, \dots, x_{d-1}$, $V[x_1, \dots, x_d]$ is a linear function of
$x_d$, and hence,
\[
\E^p_{d+1}(K) = \int_K |\la x_d, v \ra|^p dx_d
\]
for some $v \in \R^d$, provided $V_d(K)=1$. Equation \eqref{equi2} can then be
obtained by an inductive argument, provided  $K$ is in isotropic position.

When $p>2$, then we use the following Khinchine type inequality: if $K \subset
\R^d$ is a convex body of volume 1, then for any $v \in \R^d$,
\[
\left(\int_K |\la x, v \ra|^p dx \right)^{1/p} \leq c p \int_K |\la x, v \ra|\,
dx \leq c p \left(\int_K |\la x, v \ra|^2 dx \right)^{1/2}
\]
for some universal constant $c$ (see Proposition 2.1.1. of
Giannopoulos~\cite{Gian}). Then the argument of Milman and Pajor works,
yielding that there exists a constant $C$, such that
\[
\left( \frac{C}{p}\right)^d (\E^p_*(K))^{1/p} \leq (\E^2_*(K))^{1/2}.
\]
Referring to \eqref{holderpq} and \eqref{equi2}, we arrive to \eqref{equi}.

We note that in order to prove the slicing conjecture, using formulas
\eqref{lke2} and \eqref{equi}, it would suffice to verify either the first
 or the second statement (with $n=d+1$) of Conjecture~\ref{polysimplexconj}
 for any particular $p \geq 1$.

\section{Main results}\label{results}

Our goal is to provide stability versions of Theorems~\ref{ellipsoid},
\ref{polytrian} and \ref{triangleoptslice}. We shall use the  {\em Banach-Mazur
distance} $\delta_{\rm BM}(K,M)$ of the convex bodies $K$ and $M$, which is
defined by
\begin{multline*}
\delta_{\rm BM}(K,M)=\min\{\lambda\geq 1:\,
K-x\subset \Phi(M-y)\subset \lambda(K-x)\\
\mbox{ for \ } \Phi\in{\rm GL}_d,\,x,y\in\R^d\}.
\end{multline*}
If $K$ and $M$ are $o$-symmetric, then $x=y=o$ can be assumed.  It follows by
Fritz John's ellipsoid theorem that $\delta_{\rm BM}(K,B^d)\leq d$ for any
$d$-dimensional convex body $K$, and $\delta_{\rm BM}(K,B^d)\leq \sqrt{d}$
holds if $K$ is centrally symmetric. Moreover, J.~Lagarias and G. Ziegler
verified in \cite{LaZ91} that $\delta_{\rm BM}(K,T^d)\leq d+2$.

First, the stability version of Theorem~\ref{ellipsoid}.

\begin{theorem}\label{ellipsoidstab}
If $K$ is a convex body in $\R^d$ with $\delta_{\rm BM}(K,B^d)=1+\delta$
for $\delta>0$, then
for any $p \geq 1$,
\begin{align*}
\E^p_*(K)&\geq(1+\gamma^p\delta^{d+3}) \E^p_o(B^d)\\
\E^p_{d+1}(K)&\geq(1+\gamma^p\delta^{d+3}) \E^p_{d+1}(B^d),
\end{align*}
where the constant $\gamma>0$ depends on $d$ only. Moreover, if $K$ is
centrally symmetric, then the error terms can be replaced by $\gamma^p
\delta^{(d+3)/2}$.
\end{theorem}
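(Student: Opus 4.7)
The plan is to reduce the theorem to a quantitative strengthening of Busemann's inequality $\E^p_*(K) \geq \E^p_*(B^d)$. By the equivalence (\ref{p*}) between $\E^p_*$ and $\E^p_{d+1}$, the two statements of the theorem are equivalent up to a $d$-dependent factor that can be absorbed into the constant $\gamma$, so I would focus on the first one. Using the affine invariance of $\E^p_*$ together with the hypothesis $\delta_{\rm BM}(K,B^d) = 1+\delta$, I would normalize $K$ by a linear map so that $V(K) = \kappa_d$ and, after suitable rescaling, $\alpha B^d \subseteq K \subseteq \alpha(1+\delta) B^d$ with $\alpha \in (1/(1+\delta),1)$ determined by the volume constraint. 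The extremality of the Banach--Mazur distance forces both containments to be tight.

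Next I would extract geometric consequences from this tightness. The outer containment furnishes a point $z \in \partial K$ with $\|z\| = \alpha(1+\delta)$; convexity then places $\conv(\alpha B^d \cup \{z\}) \subseteq K$, and its portion outside $\alpha B^d$ is a cap-like ``bump'' of volume of order $\delta^{(d+1)/2}$. Similarly, tightness of the inner containment produces a ``dent'' where $K$ falls short of $B^d$. In the centrally symmetric case, the antipodal point $-z$ also lies in $K$ and $\gamma(K) = o$; in the general case, $\gamma(K)$ may be displaced from $o$ by as much as $O(\delta^{(d+1)/2})$.

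With this geometric picture in hand, I would compare the integral
\[
\E^p_*(K)\,V(K)^{d+p} = \int_K \cdots \int_K V([\gamma(K), x_1, \ldots, x_d])^p \, dx_1\cdots dx_d
\]
with its $B^d$-analogue. Since $V(K) = V(B^d) = \kappa_d$ the prefactors cancel, leaving the difference of integrals over $K \triangle B^d$. The heart of the argument is that mass transported radially outward from the inner dent to the outer bump strictly increases the integrand, because the simplex volume grows with the distance of each vertex from the centroid. Combining the bump volume $\sim \delta^{(d+1)/2}$ with the extra radial displacement $\sim \delta$, this mass-transport produces an additive gain of order $\delta^{(d+3)/2}$ in the integral; in the centrally symmetric case, where $\gamma(K) = o$ is fixed, this gain translates directly to the claimed stability estimate.

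The principal obstacle is the general, non-symmetric case. Here $\gamma(K)$ can drift from $o$ by an amount of the same order $\delta^{(d+1)/2}$ as the bump itself, and this drift can partially cancel the bump-induced gain in the integrand. To extract a net positive effect I anticipate a Cauchy--Schwarz-type estimate, bounding the loss from the centroid shift in terms of the square of an appropriate norm of the perturbation; this naturally squares the effective exponent and yields the weaker $\delta^{d+3}$ bound. Making this near-cancellation precise, and uniformly controlling the constants in $d$ and $p$ throughout, is the most delicate portion of the argument.
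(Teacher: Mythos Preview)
Your reduction via (\ref{p*}) does not work: the two-sided equivalence $(\E^p_*(K))^{1/p} \leq (\E^p_{d+1}(K))^{1/p} \leq (d+1)(\E^p_*(K))^{1/p}$ only pins down the ratio $\E^p_{d+1}/\E^p_*$ up to a factor $(d+1)^p$, and this slack swamps any $1+\gamma^p\delta^{d+3}$ improvement. Concretely, from $\E^p_*(K) \geq (1+\eta)\E^p_*(B^d)$ and (\ref{p*}) you obtain only $\E^p_{d+1}(K) \geq (1+\eta)(d+1)^{-p}\E^p_{d+1}(B^d)$, which says nothing. (The identity (\ref{2*}) would rescue this for $p=2$ only.) In the paper the two functionals are treated in parallel throughout Lemma~\ref{ellstablocal}, not reduced one to the other.

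The more serious gap is the core ``mass transport'' step. You propose to compare $\int_{K^d}V([\gamma(K),x_1,\ldots,x_d])^p$ directly with $\int_{(B^d)^d}V([o,x_1,\ldots,x_d])^p$, but since $V(K)=V(B^d)$ the symmetric difference $K\triangle B^d$ contributes with both signs, and there is no reason the gain from a single bump of volume $\sim\delta^{(d+1)/2}$ dominates the aggregate loss from the complementary dents, which carry the same total mass but may be spread over a region of much larger measure; moreover the integrand is not monotone in $\|x_i\|$ but only in the distance of $x_i$ to the opposite facet. The paper sidesteps this entirely: rather than comparing $K$ to $B^d$, it compares an $o$-symmetric $K$ in John position to its Steiner symmetral $K'$ with respect to a well-chosen hyperplane and proves a quantitative one-step drop $\E^p_o(K)-\E^p_o(K')\geq\hat\gamma^p\eps^{(d+3)/2}$ via the chord-integral estimate of Lemma~\ref{steinerstab}; since $\E^p_o(K')\geq\E^p_o(B^d)$ by Theorem~\ref{ellipsoid}, this suffices. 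Finally, the passage from general to symmetric $K$ is not handled by bounding the centroid drift as you suggest, but by the external Lemma~\ref{rounding} (from \cite{Bor10}): iterated Steiner symmetrisations and affine maps carry $K$ to an $o$-symmetric body whose Banach--Mazur distance to $B^d$ is still at least $1+\gamma'\delta^2$, and applying the symmetric case to that body yields the exponent $\frac{d+3}{2}\cdot 2 = d+3$.
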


Similar stability estimates preceded our work. Groemer \cite{Gro94} showed that
under rather strict regularity conditions on the  boundary of $K$, the above
statement holds with an error term of order $\delta^{c\,d^2}$ for some universal
constant $c$. Fleury, Gu\'edon and Paouris \cite{FGP07} proved a stability result
for the mean width of $L_p$-centroid bodies, which in the case $p=1$, yields a
stability estimate for $\E^1_o(K)$ by \eqref{centroidbody}. However, the error
term obtained this way is again only of order $\delta^{c\,d^2}$ for some
universal constant~$c$. We remark that for $p\neq 1$, no such direct connection
exists between $\E^p_o(K)$ and the volume of the $L_p$-centroid body.

Second, the stability version of Theorems~\ref{polytrian} and
\ref{triangleoptslice}.

\begin{theorem}
\label{triangleoptstab} If $K$ is a planar convex body with $\delta_{\rm
BM}(K,T^2)= 1+ \delta$ for some $\delta>0$, and $p \geq 1$, then
\begin{eqnarray*}
\E^p_*(K)&\leq&(1-c^p\delta^2) \E^p_*(T^2)\\
\E^p_3(K)&\leq&(1-c^p\delta^2) \E^p_3(T^2),
\end{eqnarray*}
where $c$ is a positive absolute constant. This estimate is
asymptotically sharp as $\delta$ tends to zero.
\end{theorem}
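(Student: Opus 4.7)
The plan is to follow the Campi--Colesanti--Gronchi shadow system method that already yields Theorem~\ref{triangleoptslice}, and to install a quantitative second-order estimate at the step where the problem is reduced to a finite-dimensional one.

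First, using the affine invariance of $\E^p_*$ and $\E^p_3$, I would rephrase the hypothesis $\delta_{\rm BM}(K,T^2)=1+\delta$ as a nested inclusion. Fixing a reference triangle $T^2$, the assumption is equivalent, after an affine change of coordinates and up to replacing $\delta$ by a bounded multiple, to
\[
T^2 \subset K \subset (1+\delta)\,T^2.
\]
Next, I would invoke Theorem~\ref{CCG}, which asserts convexity of $\E^p_*$ and $\E^p_n$ along shadow systems in the plane. Restricting to shadow deformations that preserve the above inclusion, the maximum of $\E^p_*(K)$ over the family $\mathcal{F}_\delta=\{K': T^2\subset K'\subset (1+\delta)T^2\}$ is attained at an extreme element. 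Such an extremal body is a polygon with at most six vertices on $\bd((1+\delta)T^2)$, obtained by truncating the three corners of $(1+\delta)T^2$ (cf.~\eqref{basicmax} and Lemma~\ref{trianglehexagon}). This reduces the problem to a compact optimisation over hexagons $H_{\mathbf{t}}$ parametrised by cut-depth vectors $\mathbf{t}\in[0,c_0\delta]^6$, with $H_{\mathbf{0}}=T^2$.

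Finally, I would carry out the finite-dimensional stability estimate. By Theorem~\ref{triangleoptslice}, $T^2$ is the unique maximiser of $\E^p_*$, so the gradient of $\E^p_*(H_{\mathbf{t}})$ in $\mathbf{t}$ vanishes at $\mathbf{t}=\mathbf{0}$ and the Hessian there is negative definite. Computing the Hessian and controlling its $p$-dependence shows that its largest eigenvalue is at most $-c^p\,\E^p_*(T^2)$ for a universal $c>0$; combined with the fact that the Banach--Mazur condition forces $|\mathbf{t}|\geq c'\delta$ for some universal $c'>0$, this yields the claimed factor $(1-c^p\delta^2)$. The bound for $\E^p_3$ is analogous in view of \eqref{p*}, and sharpness is confirmed on the one-parameter family of symmetric corner cuts of $T^2$, for which the $O(\delta^2)$ rate is explicit.

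The main obstacle is the quantitative Hessian estimate: one must verify strict negative-definiteness of the Hessian in all six directions while tracking the $p$-dependence in order to obtain the factor $c^p$ rather than a bound that degrades super-exponentially in~$p$. Since the integrand $V([x,x_1,x_2])^p$ is not polynomial for non-integer $p$, this calls for careful estimation of its derivatives near the triangular configuration, presumably by first establishing the case of integer $p$ by a symbolic computation and then interpolating in $p$ using H\"older's inequality \eqref{holderpq}.
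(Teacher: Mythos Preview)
Your reduction step has a structural gap. Shadow systems preserve area, not Banach--Mazur distance, so maximising $\E^p_*$ over the family $\mathcal{F}_\delta=\{K':T^2\subset K'\subset(1+\delta)T^2\}$ simply returns $\E^p_*(T^2)$, attained at the triangle $(1+\delta)T^2\in\mathcal{F}_\delta$. In particular, after pushing $K$ by basic systems you may very well land at (an affine copy of) a triangle, so your claim ``the Banach--Mazur condition forces $|\mathbf t|\geq c'\delta$'' is unjustified: nothing in the reduction retains the information $\delta_{\rm BM}(K,T^2)=1+\delta$. The paper fixes this by using a quantity that \emph{is} preserved under linear shadow systems: the area ratio $\varepsilon=A(K)/A(T_K)-1$, where $T_K$ is the largest inscribed triangle. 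Proposition~\ref{trianglehexagon} reduces $K$ to a hexagon $\Pi$ with $T_K\subset\Pi$, all vertices on $\partial\widetilde T$, and $A(\Pi)=A(K)$, so $\Pi$ is forced to have a vertex at distance $\asymp\varepsilon$ from $T_K$; only at the very end is $\varepsilon$ related to $\delta$ via the elementary Lemma~\ref{areaBM}.

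Two further points. First, in your six-parameter model the Hessian at $\mathbf t=0$ cannot be negative definite: if only one of the two cut-depths at a corner is positive, the ``cut'' passes through the vertex and the body is unchanged, so these are null directions. The paper sidesteps any multivariate second-order analysis by running a \emph{single} basic shadow system at the farthest vertex of $\Pi$ and invoking the one-dimensional quantitative convexity of Lemma~\ref{steinerstab}; this is where the $\varepsilon^2$ and the $c^p$ dependence come from (see \eqref{segmentstab} and \eqref{alpha}). Second, the appeal to \eqref{p*} to pass from $\E^p_*$ to $\E^p_3$ does not give stability: the inequality only yields $\E^p_3(K)\leq 3^p\E^p_*(K)$, and the factor $3^p$ swamps $(1-c^p\delta^2)$. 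In the paper $\E^p_3$ is handled by the same argument as $\E^p_*$, with the obvious modification of adding a third integration segment (see \eqref{segmentstabtri}).
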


\section{Linear shadow systems and Steiner symmetrisation}
\label{secshadow}

 For obtaining the stability versions of both the minimum
and maximum inequalities, we shall use the following notion. Given a compact
set $\Xi$ in $\R^d$, a unit vector $v$, and for each $ x \in\Xi$, a speed
$\varphi(x)\in\R$, the corresponding {\em shadow system} is
$$
\Xi_t=\{ x+t\varphi(x)v:x\in\Xi \} \mbox{ \ for $t\in\R$}.
$$
According to the classical work of H. Hadwiger \cite{Had57}, C.A. Rogers, G.C.
Shephard \cite{RoS58} and Shephard \cite{She64},

\begin{theorem}[Hadwiger,Rogers,Shephard]
\label{shadow} For a shadow system $\Xi_t$, every quermassintegral of $\Xi_t$
is a convex function of $t$.
\end{theorem}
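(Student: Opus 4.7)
The plan is to realize the shadow system as a family of projections of a single convex body in one higher dimension, and then apply Cauchy-type integral formulas to read off convexity in $t$. Concretely, I would lift $\Xi$ to $\R^{d+1}$ by setting
$$
\tilde{K}=\conv\{(x,\varphi(x)):x\in\Xi\}\subset\R^{d+1},
$$
where the extra coordinate records the speed. Let $\pi_t:\R^{d+1}\to\R^d$ be the linear surjection $\pi_t(y,s)=y+tsv$, whose kernel is spanned by $(-tv,1)$. A direct check shows $\pi_t(\tilde{K})=\conv(\Xi_t)=:K_t$, so the whole shadow system is nothing more than the family of (non-orthogonal) projections of the fixed body $\tilde{K}$ along the direction $w(t)=(-tv,1)$ onto $\R^d\times\{0\}$, a direction that is affine in $t$.

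For the volume functional, I would use the projection version of Cauchy's formula, which for the non-orthogonal projection $\pi_t$ takes the form
$$
V_d(K_t)=\tfrac{1}{2}\int_{S^d}|\la w(t),u\ra|\,dS_d(\tilde{K},u),
$$
where $S_d(\tilde{K},\cdot)$ is the surface area measure of $\tilde{K}$. The factor $\sqrt{1+t^2}$ that at first sight arises from the non-orthogonality of $\pi_t$ cancels exactly against the normalisation of $w(t)$, leaving a clean linear-in-$t$ affine form inside the absolute value. For each fixed $u$ the integrand $|\la w(t),u\ra|$ is the absolute value of an affine function of $t$, hence convex; integrating against the positive measure $S_d(\tilde{K},\cdot)$ preserves convexity, giving the claim for $W_0=V_d$.

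For the higher quermassintegrals $W_{d-k}$, my plan is to invoke Kubota's integral recursion,
$$
W_{d-k}(K_t)=c_{d,k}\int_{G(d,k)}V_k(K_t\mid L)\,d\nu(L),
$$
and treat each integrand separately. For a fixed $k$-subspace $L\subset\R^d$, the projection $K_t\mid L$ is itself the shadow on $L$ of the fixed $(k+1)$-dimensional convex body $\tilde{K}\mid(L\oplus\R e_{d+1})$ along the image of $w(t)$, which again depends affinely on $t$. The same Cauchy argument as in the volume case, now carried out inside $L\oplus\R e_{d+1}$, shows that $t\mapsto V_k(K_t\mid L)$ is convex; integrating over the Grassmannian against the invariant measure $\nu$ preserves convexity, so $W_{d-k}(K_t)$ is convex in $t$ as required.

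The main obstacle I anticipate is bookkeeping at the two junctures where non-orthogonality appears: first, in verifying that the Jacobian factors arising from the projection $\pi_t$ collapse to a single affine integrand $|\la w(t),u\ra|$; and second, in checking that Kubota's formula can be applied with the shadow parameter $t$ commuting with the Grassmannian integration, so that convexity is not destroyed when passing to lower-dimensional projections. Once these reductions are in place, the convexity of $t\mapsto|\la w(t),u\ra|$ in the integrand does all the work.
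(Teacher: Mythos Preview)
The paper does not give its own proof of this theorem: it is quoted as a classical result and attributed to Hadwiger, Rogers--Shephard, and Shephard with references only. There is therefore nothing in the paper to compare your argument against line by line.

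That said, your proposal is sound and is essentially Shephard's original argument from \cite{She64}. The lifting $\tilde{K}=\conv\{(x,\varphi(x)):x\in\Xi\}$ and the identification $\conv\Xi_t=\pi_t(\tilde{K})$ with $\pi_t(y,s)=y+tsv$ are correct; your cancellation of the normalisation factor $\sqrt{1+t^2}$ also checks out, since the linear isomorphism $w(t)^\perp\to\R^d\times\{0\}$ obtained by sliding along $w(t)$ has determinant exactly $\sqrt{1+t^2}$. The Kubota step works as you describe: for fixed $L\in G(d,k)$ one has $K_t\mid L$ equal to the projection of $\tilde{K}\mid(L\oplus\R e_{d+1})$ along $(-t\,\pi_Lv,1)$, again affine in $t$, so the volume argument repeats verbatim in dimension $k+1$. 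One minor point worth noting in a write-up is the degenerate case where $\tilde{K}$ fails to be full-dimensional in $\R^{d+1}$ (e.g.\ when $\varphi$ is affine on $\Xi$); there the surface area measure $S_d(\tilde{K},\cdot)$ must be interpreted as the measure of a lower-dimensional convex set embedded in $\R^{d+1}$, but the conclusion is immediate in that case anyway since $t\mapsto V_d(K_t)$ is then the absolute value of an affine function.
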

We note that  for any $p\geq 1$, the convexity of the $p$th moment of the
quermassintegrals follows as well.

In the last decades, shadow systems were successfully applied to various
extremal problems about convex bodies (see e.g. S. Campi, P. Gronchi
\cite{CaG06}, and M. Meyer, Sh. Reisner \cite{MeR06}). For our purposes, we
need a restricted class of shadow movements, introduced in \cite{CCG99} by S.
Campi, A. Colesanti, and P. Gronchi. We say that $K_t$, $t\in[a,b]$, is a {\em
linear shadow system of convex bodies}, if we start with a convex body $K$,
 the speed $\varphi(x)$ is constant along any chord of $K$ parallel to
 $v$,
 and
$$
K_t=\{x+t\varphi(x)v:\,x\in K\} \mbox{ \ for $t\in[a,b]$}
$$
is convex for every $t \in [a,b]$. In this case, $\varphi(x)$ is continuous on
$K$, and it depends only on the projection $\pi_vx$ of $x$ to $v^\bot$. Moreover, the
volume of $K_t$ is constant, and the transformation $x\mapsto x+t\varphi(x)v$
from $K$ to $K_t$ is measure preserving.

For any linear shadow system $K_t$, there also exists
 a linear shadow system $\widetilde{K}_t$, $t\in[a,b]$,
 such that
\begin{equation}
\label{newshadow} \mbox{$\gamma(\widetilde{K}_t)=o$ for $t\in[a,b]$, and each
$\widetilde{K}_t$ is a translate of $K_t$.}
\end{equation}
To see this,  note that
\begin{equation}
\label{shadowcentroid}
 \gamma(K_t)=\gamma(K)+t\cdot v\cdot V(K)^{-1}\int_{K}\varphi(z)\,dz.
\end{equation}
Therefore, $\widetilde{K}_t=K_t-\gamma(K_t)$ can be achieved by using the speed
$$
\tilde{\varphi}(x)=\varphi(x+\gamma(K))-V(K)^{-1}\int_{K}\varphi(z)\,dz \mbox{
\ for $x\in \widetilde{K}$}.
$$

The main reason for restricting shadow movements is the following result of
\cite{CCG99} (where linear shadow systems were called RS-movements).

\begin{theorem}[Campi, Colesanti, Gronchi]\label{CCG}
If $K_t$, $t\in[a,b]$, is a linear shadow system, then $\E^p_n(K_t)$,
$\E^p_o(K_t)$ and $\E^p_*(K_t)$ are convex functions of $t$. If either of these
convex functions is linear, then any two elements of the system are affine
images of each other, and actually linear images in the case of $\E^p_o(K_t)$.
\end{theorem}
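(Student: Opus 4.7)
The strategy is to transport the convexity from inside the integrals. By the defining properties of a linear shadow system, the map $\Psi_t\colon x\mapsto x+t\varphi(x)v$ is a measure-preserving bijection from $K$ onto $K_t$ and $V(K_t)=V(K)$ is independent of $t$, so a change of variables yields
\[
V(K)^{n+p}\,\E^p_n(K_t)=\int_{K^n} V\bigl([\Psi_t(x_1),\ldots,\Psi_t(x_n)]\bigr)^p\,dx_1\cdots dx_n,
\]
and an analogous identity for $\E^p_o(K_t)$ with one vertex pinned at $o$ (speed zero). Hence it suffices to show that, for each fixed configuration $(x_1,\ldots,x_n)$, the integrand is a convex function of $t$.

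For any such fixed configuration, the finite set $\{\Psi_t(x_1),\ldots,\Psi_t(x_n)\}$ is itself a shadow system generated by $\{x_1,\ldots,x_n\}$ with speeds $\varphi(x_i)$, so Theorem~\ref{shadow} applied to its convex hull gives that the volume is a nonnegative convex function of $t$. Since $s\mapsto s^p$ is convex and nondecreasing on $[0,\infty)$ for every $p\geq 1$, its composition with this volume is again convex in $t$, and integration over the $t$-independent domain $K^n$ preserves convexity. This establishes the claims for $\E^p_n$ and $\E^p_o$. The convexity of $\E^p_*(K_t)$ follows by passing to the translated shadow system $\widetilde K_t$ produced in~\eqref{newshadow}: translation leaves $\E^p_*$ invariant, and $\E^p_*(\widetilde K_t)=\E^p_o(\widetilde K_t)$ reduces us to the previous case.

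For the equality characterisation, assume one of these convex functions is affine on $[a,b]$. Since the integrand is itself convex in $t$ for each fixed configuration, and the integral of a family of convex functions can only be affine if almost every integrand is affine, we deduce that $V([\Psi_t(x_1),\ldots,\Psi_t(x_n)])^p$ is affine in $t$ for almost every $(x_1,\ldots,x_n)$. Specialising to $n=d+1$, the simplex volume equals $(d!)^{-1}|\det M(t)|$, where $M(t)$ is a $d\times d$ matrix whose only $t$-dependent row records the $v$-component; thus $\det M(t)$ is itself linear in $t$. For $p>1$, affinity of $|\det M(t)|^p$ forces $\det M(t)$ to be constant, while for $p=1$ it forces it to be of constant sign on $[a,b]$. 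Writing $x=(y,z)$ with $y\in v^\perp$ and $z\in\R$ so that $\varphi$ depends only on $y$, the vanishing of the coefficient of $t$ in $\det M(t)$ translates, after varying the $y_i$ through a set of full measure, into the condition that $\varphi(y_j)-\varphi(y_1)$ be a linear function of $y_j-y_1$; hence $\varphi$ is affine on $\pi_v K$ in the $\E^p_n$ and $\E^p_*$ settings. In the $\E^p_o$ setting, the vertex $o$ contributes no difference of the form $\varphi(y_j)-\varphi(y_1)$, so the constant term of $\varphi$ does not cancel, and one is forced into the stronger conclusion that $\varphi$ is \emph{linear} on $\pi_v K$. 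Consequently, $\Psi_t$ is itself an affine (respectively, linear) self-map of $\R^d$, so every $K_t$ is an affine (respectively, linear) image of $K$.

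The main obstacle will be the equality analysis for $p=1$, where the pointwise condition is only sign-preservation of $\det M(t)$ rather than constancy. Upgrading this to vanishing of the $t$-coefficient for almost every configuration requires exploiting a continuous family of perturbations of the $x_i$ and, when useful, extending the shadow system to a larger parameter interval before reapplying convexity. The case $n\geq d+2$ requires a separate, more delicate argument, since the volume of the convex hull is no longer a single determinant; one may either apply the same determinantal analysis to affinely independent subtuples of size $d+1$, or appeal to a triangulation of the convex hull.
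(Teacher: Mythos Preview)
Your convexity argument is essentially the paper's: both rewrite the expectation via the measure-preserving map $\Psi_t$ as an integral over a fixed domain, invoke Theorem~\ref{shadow} on the integrand viewed as a shadow system of the finite point set, compose with the convex nondecreasing map $s\mapsto s^p$, and reduce $\E^p_*$ to $\E^p_o$ through~\eqref{newshadow}.

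The equality analysis is where you diverge, and where your proposal is incomplete. You try to deduce directly that $\varphi$ is affine (resp.\ linear) by forcing the $t$-coefficient of $\det M(t)$ to vanish for almost every configuration. This is valid and quite clean for $p>1$ with $n=d+1$ (and for $\E^p_o$, $\E^p_*$), but---as you yourself flag---it leaves two genuine gaps. For $p=1$, affinity of $|\det M(t)|$ on $[a,b]$ only gives sign-constancy of the linear function $\det M(t)$ on $[a,b]$, not vanishing of its slope; your suggested fix of ``extending the shadow system to a larger interval'' is not directly available, since linearity of the integral is only assumed on $[a,b]$. For $n\geq d+2$, the convex-hull volume is piecewise polynomial in $t$ rather than $|\text{affine}|$, and passing to $(d{+}1)$-subtuples does not recover the actual integrand.

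The paper sidesteps both difficulties by reversing the logic. Instead of proving $\varphi$ affine, it argues by contradiction: if some $K_s$, $K_t$ are not linear images of each other, one can exhibit a $d$-tuple in $K$ and a parameter $\tau$ at which the shifted points $\Psi_\tau(x_i)$ become linearly dependent while at $\tau+\mu$ they are independent. Then $V_d(\Xi_\tau)=0<\tfrac12\bigl(V_d(\Xi_{\tau-\mu})+V_d(\Xi_{\tau+\mu})\bigr)$, and by continuity of $\varphi$ this strict inequality persists on a neighbourhood of configurations, forcing strict convexity of the integral. This single degenerate-configuration device works uniformly for all $p\geq 1$; it is exactly the missing ingredient that would close your $p=1$ gap, and (via \cite{CCG99}) the $\E^p_n$ case as well.
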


We note that although Theorem~\ref{CCG} was proved only for $\E^p_n(K_t)$ in
\cite{CCG99}, the method works for the other functionals as well
(see also Lemma~1 for a direct approach). Indeed, for
handling $\E^p_n(K_t)$, the authors consider for each $n$-tuple
$\Xi=\{x_1,\ldots,x_n\}\subset K$ the associated shadow system
$$
\Xi_t=[x_1+t\varphi(x_1)v,\ldots,x_n+t\varphi(x_n)v].
$$
Since $V_d(\Xi_t)$ is a convex function of $t$ by Theorem~\ref{shadow},
we conclude Theorem~\ref{CCG} by
\begin{eqnarray*}
\E^p_n(K_t)&=&V(K)^{-n-p}\times\\
&&\int_{K}\ldots\int_{K}
V([x_1+t\varphi(x_1)v,\ldots,x_n+t\varphi(x_n)v])^p \,dx_1\ldots dx_n.
\end{eqnarray*}
 In order to
obtain the convexity of $\E^p_o(K_t)$, to each $d$-tuple
$\{x_1,\ldots,x_d\}\subset K\backslash o$ one assigns the $d+1$-tuple
$\Xi=\{o,x_1,\ldots,x_d\}$, and defines the speed of $o$ to be zero. The
convexity $\E^p_*(K_t)$ follows from \eqref{newshadow}.

Finally, we have to
deal with the extremal situations only. The argument is based on ideas in
\cite{CCG99}. Let us indicate it in the  case when $\E^p_o(K_t)$ is a linear
function of $t$, which also settles the case when $\E^p_*(K_t)$ is a linear
function of $t$. If for some $s,t\in[a,b]$, $s<t$, $K_t$ and $K_s$
 are not images of each other by any linear transformation, then
there exist $\tau+\mu,\tau-\mu\in[s,t]$, $\mu>0$,
and $d$-tuple $\{x_1,\ldots,x_d\}\subset K$
with the property that
 $\{x_1+\tau\varphi(x_1)v,\ldots,x_d+\tau\varphi(x_d)v\}$ is
 linearly dependent, and
$\{x_1+(\tau+\mu)\varphi(x_1)v,\ldots,x_d+(\tau+\mu)\varphi(x_d)v\}$
 is linearly independent.
It follows for $\Xi=\{x_1,\ldots,x_d,o\}$ that
$V_d(\Xi_{\tau})<\frac12(V_d(\Xi_{\tau-\mu})+V_d(\Xi_{\tau+\mu}))$,
which in turn yields
$\E^p_o(K_{\tau})<\frac12(\E^p_o(K_{\tau-\mu})+\E^p_o(K_{\tau+\mu}))$
by Theorem~\ref{shadow} and the continuity of $\varphi$.

When dealing with linear shadow systems, the following simple observation is
very useful. If $p>0$, $\sigma_0,\ldots,\sigma_d$ are parallel segments, and $\Phi$ is
an affine transformation that acts by translation along any line parallel to
the $\sigma_i$'s, then
\begin{equation}
\label{translateo}
\begin{split}
\int_{\sigma_1}\ldots\int_{\sigma_d}
&V([o,z_1,\ldots,z_d])^p\,dz_1\ldots dz_d\\
&= \int_{\Phi\sigma_1}\ldots\int_{\Phi\sigma_d} V([\Phi o,\Phi z_1,\ldots,\Phi
z_d])^p\,dz_1\ldots dz_d,
\end{split}
\end{equation}
and
\begin{equation}
\label{translateall} \begin{split} \int_{\sigma_0}\ldots\int_{\sigma_d}
&V([z_0,\ldots,z_d])^p\,dz_0\ldots dz_d \\
&=\int_{\Phi\sigma_0}\ldots\int_{\Phi\sigma_d} V([\Phi z_0,\ldots,\Phi
z_d])^p\,dz_0\ldots dz_d.
\end{split}
\end{equation}

All the known proofs of Theorem~\ref{ellipsoid} use the fact that the moments
to be estimated are monotone decreasing with respect to Steiner symmetrisation.
This is a consequence Theorem~\ref{CCG}, due to the following connection
between Steiner symmetrals and shadow systems. Let $K$ be a convex body, and
$H$ a hyperplane. Consider the unique linear shadow system $K_t$, $t\in[-1,1]$,
such that $K_1=K$, and $K_{-1}$ is the reflected image of $K$ through $H$. Then
$K_0$ is the Steiner symmetral $K_H$ of $K$ with respect to $H$. Now,
Theorem~\ref{ellipsoid} follows by using the well-known fact that
$V(K)^{\frac{1}d}B^d$ can be obtained as a limit of a sequence of Steiner
symmetrals starting from $K$.

The behaviour of $\E^p_{d+1}(K)$, $\E^p_o(K)$ and $\E^p_*(K)$ under Steiner
symmetrisation can be computed easily using basic properties of determinants.
Refining the proof, we will be able to deduce the stability estimates. It goes
as follows. Assume that we take the Steiner symmetral of $K$ with respect to
$H$. Let $x_0, \dots, x_d$ be an arbitrary set of points of $H$, and consider
the integral over those simplices whose vertices project to the points $(x_i)$
in $H$. By \eqref{translateo} and \eqref{translateall}, we may assume that the
midpoints of the chords of $K$ through $x_0, \dots, x_{d-1}$ are located in
$H$. Then the Steiner symmetrisation moves only $\sigma(x_d)$, and the
situation is easily handled.

For Lemmas~\ref{steiner} and \ref{steinerstab}, let $x_0,\ldots,x_d$ be
contained in a hyperplane $H$ in $\R^d$ in a way such that no $d$ of them are
contained in any $(d-2)$-plane, and let $v$ be a unit vector not parallel to
$H$. In addition, let $\delta>0$, $\alpha_0\geq 0$, and $\alpha_i>0$ for
$i=1,\ldots,d$. For Lemma~\ref{steiner}, to save space, we also use the
(slightly obscure) convention that $\int_{J_0}dt_0=1$ for $J_0 = \{x_0\}$.

\begin{lemma}
\label{steiner} Let $p\geq 1$, let $0\leq\beta_i<\alpha_i$ for $i=1,\ldots,d$,
and let $\beta_0=\alpha_0$, if $\alpha_0=0$, and  $0\leq\beta_0<\alpha_0$ if
$\alpha_0>0$. For $J_i=[-\alpha_i,-\beta_i]\cup[\beta_i,\alpha_i]$,
$0=1,\ldots,d$, we have
$$
\varphi(s)=\int_{J_d\,+s} \int_{J_{d-1}}\ldots \int_{J_0}
V([x_0+t_0v,\ldots,x_d+t_dv])^p\,dt_0\ldots dt_d
$$
is convex, and $\varphi(s)\geq \varphi(0)$.
\end{lemma}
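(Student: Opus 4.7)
The plan is to observe that $V([x_0+t_0v,\ldots,x_d+t_dv]) = (d!)^{-1}|L(t_0,\ldots,t_d)|$ for a single fixed linear form $L$ on $\R^{d+1}$, and then derive both conclusions from elementary convexity and symmetry of $t\mapsto|L(t)|^p$. To produce $L$, write the volume as $(d!)^{-1}|\det(w_1,\ldots,w_d)|$ with $w_i=(x_i-x_0)+(t_i-t_0)v$, and expand the determinant by multilinearity along each column, splitting $w_i$ into $x_i-x_0$ and $(t_i-t_0)v$. Any term selecting $v$ in two or more columns vanishes, and the term selecting no $v$ also vanishes because $x_1-x_0,\ldots,x_d-x_0$ all lie in the $(d-1)$-dimensional subspace $H-x_0$. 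The $d$ surviving terms are each linear in some $t_i-t_0$, producing $L(t_0,\ldots,t_d)=\sum_{i=0}^{d}c_it_i$ with $\sum_{i=0}^{d} c_i=0$ (consistent with translation invariance of the simplex under a common shift of all $t_i$).

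For convexity, substitute $t_d\mapsto t_d+s$ in the innermost integral to rewrite
\[
\varphi(s) = (d!)^{-p}\int_{J_d}\int_{J_{d-1}}\cdots\int_{J_0}\bigl|L(t_0,\ldots,t_{d-1},t_d+s)\bigr|^p \,dt_0\cdots dt_d.
\]
For each fixed $(t_0,\ldots,t_d)$, the map $s\mapsto L(t_0,\ldots,t_{d-1},t_d+s)$ is affine, so $s\mapsto|L|^p$ is convex on $\R$ since $p\geq 1$; the integral of a pointwise family of convex functions remains convex, yielding convexity of $\varphi$.

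For the inequality $\varphi(s)\geq\varphi(0)$, show that $\varphi$ is even by applying the change of variables $t_i\mapsto -t_i$ simultaneously for all $i=0,\ldots,d$ in the expression for $\varphi(-s)$. The integrand $|L(t)|^p$ is invariant because $L$ is linear; each $J_i$ with $i<d$ maps to $-J_i=J_i$ by its symmetry about the origin (the degenerate case $\alpha_0=0$ is handled trivially by the convention $\int_{J_0}dt_0=1$), while $J_d-s$ maps to $-(J_d-s)=J_d+s$. Hence $\varphi(-s)=\varphi(s)$, and a convex even function attains its minimum at the origin. The argument is short: the only conceptual step is the identification of the simplex volume as $|\cdot|^p$ applied to a single linear form, and no substantial obstacle remains beyond this observation.
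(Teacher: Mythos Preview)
Your proof is correct and follows essentially the same approach as the paper's: both argue that the integrand is the $p$th power of the absolute value of a linear function of $s$, deduce convexity of $\varphi$ by integrating, and conclude $\varphi(s)\geq\varphi(0)$ from evenness. Your version simply spells out in detail the two points the paper leaves as one-line assertions---namely, why the simplex volume is affine in each $t_i$ (your multilinear expansion of the determinant) and why $\varphi$ is even (your substitution $t_i\mapsto -t_i$).
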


\begin{proof} For any fixed $t_i\in J_i$, $i=0,\ldots,d$, the function
$$
V([x_0+t_0v,\ldots,x_{d-1}+t_{d-1}v,x_d+(t_d+s)v])^p
$$
of $s$ is convex because it is the $p$th power of the absolute value of a
linear function. Therefore $\varphi(s)$ is convex as well. Since $\varphi(s)$
is even, we have $\varphi(s)\geq\varphi(0)$.
\end{proof}

Naturally, Lemma~\ref{steiner} with $\beta_i=0$, $i=0,\ldots,d$, directly
yields Theorem~\ref{CCG} for $\E^p_{d+1}(K)$, $\E^p_o(K)$ and $\E^p_*(K)$. Now
we provide a stability version under a technical (but necessary) side condition.

\begin{lemma}
\label{steinerstab}
Let $p \geq 1$ and $\delta \in(0,\alpha_d/2)$, and assume that if
$|t_i|\leq\alpha_i$ for every $i=0,\ldots,d-1$, then
\begin{equation}
\label{intcond} {\rm aff}\{x_0+t_0v,\ldots,x_{d-1}+t_{d-1}v\}\cap
[x_d-(\alpha_d-\delta)v,x_d+(\alpha_d-\delta)v]\neq\emptyset.
\end{equation}
Then the following inequalities hold.
\begin{itemize}
\item[(i)] In the case $\alpha_0=0$:
\begin{align*}
&\int_{-\alpha_d+\delta}^{\alpha_d+\delta}
\int_{-\alpha_{d-1}}^{\alpha_{d-1}}\ldots \int_{-\alpha_1}^{\alpha_1}
V([x_0,x_1+t_1v,\ldots,x_d+t_dv])^p\,dt_1\ldots dt_d\\
&\qquad- \int_{-\alpha_d}^{\alpha_d} \int_{-\alpha_{d-1}}^{\alpha_{d-1}}\ldots
\int_{-\alpha_1}^{\alpha_1} V([x_0,x_1+t_1v,\ldots,x_d+t_dv])^p\,dt_1\ldots
dt_d\\
&\geq\delta^2\,\frac{p2^{d-p-1}}{d^p}\,\alpha_1\ldots\alpha_{d-1}\alpha_d^{p-1}\,
V_{d-1}(\pi_v[x_0,\ldots,x_{d-1}])^p.
\end{align*}
\item[(ii)] If $\alpha_0>0$, then
\begin{align*}
&\int_{\delta-\alpha_d}^{\delta+\alpha_d}
\int_{-\alpha_{d-1}}^{\alpha_{d-1}}\ldots \int_{-\alpha_0}^{\alpha_0}
V([x_0+t_0v,\ldots,x_d+t_dv])\,dt_0\ldots dt_d\\
&\qquad- \int_{-\alpha_d}^{\alpha_d} \int_{-\alpha_{d-1}}^{\alpha_{d-1}}\ldots
\int_{-\alpha_0}^{\alpha_0}
V([x_0+t_0v,\ldots,x_d+t_dv])^p\,dt_0\ldots dt_d\\
&\geq \delta^2 \, \frac{p2^{d-p}}{d^p}\,\alpha_0\ldots\alpha_{d-1}\alpha_d^{p-1}\,
V_{d-1}(\pi_v[x_0,\ldots,x_{d-1}])^p.
\end{align*}
\end{itemize}
\end{lemma}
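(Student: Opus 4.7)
The plan is to reduce the inner $t_d$-integration to a one-variable function of the shift, exploit the odd symmetry of the remaining integration variables to rewrite the difference as a symmetric second difference, and estimate this via Taylor's formula. First, by multilinearity of the determinant in homogeneous coordinates $\hat y_i = (x_i + t_i v, 1) \in \R^{d+1}$, one obtains
\[
V_d([x_0+t_0 v, \ldots, x_d+t_d v]) = B \, |t_d - \tau(t_0, \ldots, t_{d-1})|,
\]
where $B = V_{d-1}(\pi_v[x_0, \ldots, x_{d-1}])/d$ is a positive constant (thanks to the non-degeneracy hypothesis on the $x_i$'s together with $v$ being transverse to $H$), and $\tau$ is a \emph{linear} function of $(t_0, \ldots, t_{d-1})$, explicitly $\tau = -\sum_{j<d} t_j D_j / D_d$ for certain constants $D_j$. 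Hypothesis \eqref{intcond} translates to $|\tau| \leq \alpha_d - \delta$ on the entire integration box, and the substitution $u = t_d - \tau$ reduces the inner integral to $B^p g(s - \tau)$, where
\[
g(a) = \int_{a-\alpha_d}^{a+\alpha_d} |u|^p\, du = \frac{(\alpha_d+a)^{p+1} + (\alpha_d-a)^{p+1}}{p+1}, \qquad |a| \leq \alpha_d.
\]
The function $g$ is $C^2$, even, and convex, with $g''(a) = p[(\alpha_d+a)^{p-1} + (\alpha_d-a)^{p-1}]$.

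Since $\tau$ is linear in $(t_0, \ldots, t_{d-1})$ and the integration box is symmetric under negation, the substitution $t \mapsto -t$ yields $\int g(\delta - \tau)\, dt = \int g(\delta + \tau)\, dt$, hence with $\varphi(s)$ denoting the outer integral as a function of the shift,
\[
\varphi(\delta) - \varphi(0) = \frac{B^p}{2}\int \bigl[g(\tau+\delta) + g(\tau-\delta) - 2g(\tau)\bigr]\, dt_0 \cdots dt_{d-1}.
\]
Integration by parts (the integral form of Taylor's theorem) gives the symmetric second-difference inequality $g(\tau+\delta)+g(\tau-\delta)-2g(\tau) \geq \delta^2 \min_{[\tau-\delta,\tau+\delta]} g''$, which applies because $|\tau \pm \delta| \leq \alpha_d$ throughout the box. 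A uniform lower bound $g''(a) \geq p \cdot 2^{1-p}\alpha_d^{p-1}$ on $[-\alpha_d, \alpha_d]$ follows from subadditivity of $x \mapsto x^{p-1}$ for $1 \leq p \leq 2$ (yielding $g'' \geq p \cdot 2^{p-1}\alpha_d^{p-1}$) and from midpoint convexity for $p \geq 2$ (yielding $g'' \geq 2p\,\alpha_d^{p-1}$); both bounds exceed $p \cdot 2^{1-p}\alpha_d^{p-1}$.

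Finally, multiplying the pointwise bound by $B^p/2$ and integrating over the remaining variables yields the stated inequalities. In case (i), the convention $\int_{J_0} dt_0 = 1$ makes the $t_0$-integration trivial, leaving a box of volume $2^{d-1}\alpha_1\cdots\alpha_{d-1}$; combined with $B^p = V_{d-1}(\pi_v[x_0,\ldots,x_{d-1}])^p/d^p$, this produces the factor $\frac{p \cdot 2^{d-p-1}}{d^p}$. In case (ii), the extra $t_0$-integration contributes a further $2\alpha_0$, giving $\frac{p \cdot 2^{d-p}}{d^p}$. The main obstacle is producing the uniform lower bound on $g''$ with the correct $\alpha_d^{p-1}$ dependence, valid across all admissible $\tau$; this is what converts the pointwise second-difference inequality into the stated multiplicative stability gain, everything else reducing to multilinearity, Fubini, and Taylor expansion.
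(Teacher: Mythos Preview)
Your proof is correct and reaches the same constants as the paper, but the organisation is genuinely different and worth noting. The paper symmetrises the integrand to $\omega(t_0,\ldots,t_d)=V(+)^p+V(-)^p$, then telescopes in $t_d$ so that the difference becomes $\int_{\alpha_d-\delta}^{\alpha_d}[\omega(\ldots,t_d+\delta)-\omega(\ldots,t_d)]\,dt_d$; one factor of $\delta$ comes from the slab width, the other from the pointwise bound \[(t_d+\delta+s)^p+(t_d+\delta-s)^p-(t_d+s)^p-(t_d-s)^p\geq p\delta\alpha_d^{p-1}/2^{p-1},\] which they prove by a monotonicity case analysis in $s$ according to whether $p\geq 2$ or $1\leq p<2$, using $t_d\geq\alpha_d/2$. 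You instead integrate $t_d$ out \emph{first}, obtaining the closed form $g(a)=\frac{(\alpha_d+a)^{p+1}+(\alpha_d-a)^{p+1}}{p+1}$, and then get both factors of $\delta$ in one stroke from the second-difference inequality $g(\tau+\delta)+g(\tau-\delta)-2g(\tau)\geq\delta^2\min g''$ together with the uniform bound $g''\geq p\,2^{1-p}\alpha_d^{p-1}$. Your route is cleaner: it replaces the ad hoc monotonicity argument by a single Taylor estimate and makes the role of condition~\eqref{intcond} (namely $|\tau|\leq\alpha_d-\delta$, keeping the argument of $g$ in the region where the closed form and $g''$ bound are valid) completely transparent. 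The paper's route, on the other hand, avoids computing the $t_d$-integral in closed form and works directly at the level of the integrand.
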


\begin{proof}
We prove only (ii); obtaining (i) by the same method is straightforward. Due to
condition (\ref{intcond}) and symmetry, and by using the  notation
\begin{align*}
\omega(t_0, t_1, \dots , t_d)=& V([x_0+t_0v,\ldots,x_{d-1}+t_{d-1}v,x_d+t_dv])^p\\
&+ V([x_0-t_0v,\ldots,x_{d-1}-t_{d-1}v,x_d+t_dv])^p,
\end{align*}
the following
holds:
\begin{align*}
&2\int_{-\alpha_d+\delta}^{\alpha_d+\delta}
\int_{-\alpha_{d-1}}^{\alpha_{d-1}}\ldots \int_{-\alpha_0}^{\alpha_0}
V([x_0+t_0v,\ldots,x_d+t_dv])^p\,dt_0\ldots dt_d \\
&\qquad -2\int_{-\alpha_d}^{\alpha_d} \int_{-\alpha_{d-1}}^{\alpha_{d-1}}\ldots
\int_{-\alpha_0}^{\alpha_0} V([x_0+t_0v,\ldots,x_d+t_dv])^p\,dt_0\ldots dt_d\\
&=\int_{\alpha_d-\delta}^{\alpha_d}\ldots \int_{-\alpha_0}^{\alpha_0}
\omega(t_0, \dots ,t_{d-1}, t_d+\delta)-
 \omega(t_0, \dots,t_{d-1} ,t_d )\,dt_0\ldots dt_d.
\end{align*}
For fixed $t_i\in[-\alpha_i,\alpha_i]$, $i=0,\ldots,d-1$
and $t_d\in [\alpha_d-\delta,\alpha_d]$,
let $s\in [-\alpha_d+\delta,\alpha_d-\delta]$ satisfy that
$x_0+t_0v$,\ldots,$x_{d-1}+t_{d-1}v$ and $x_d+sv$
are contained in a hyperplane. It follows that
\begin{align*}
\omega(t_0, \dots ,t_{d-1}, t_d+\delta)-
 \omega(t_0, \dots,t_{d-1} ,t_d )&=\\
\frac{V_{d-1}(\pi_v[x_0,\ldots,x_{d-1}])^p}{d^p}\times&\\
[(t_d+\delta+s)^p+(t_d+\delta-s)^p-(t_d+s)^p
-(t_d-s)^p].&
\end{align*}
We claim that
\begin{equation}
\label{tsdelta}
(t_d+\delta+s)^p+(t_d+\delta-s)^p-(t_d+s)^p
-(t_d-s)^p\geq p\delta\alpha_d^{p-1}/2^{p-1}.
\end{equation}
We may assume that $s\geq 0$, and hence $s\in[0,t_d]$. Let $\psi(s)$ be the
left hand side of (\ref{tsdelta}) as a function of $s$, then
$$
\psi'(s)=p(t_d+\delta+s)^{p-1}-p(t_d+\delta-s)^{p-1}-
[p(t_d+s)^{p-1}-p(t_d-s)^{p-1}].
$$
Since $p\tau^{p-1}$ is convex, if $p\geq 2$, and
 concave, if $1\leq p< 2$ for $\tau>0$, we deduce that $\psi'$
is non-negative, hence $\psi$
is increasing, if $p\geq 2$, and $\psi'$
is non-positive, hence $\psi$ is
 decreasing, if $1\leq p< 2$. In particular, we may assume
$s=0$, if $p\geq 2$, and
 $s=t_d$, if $1\leq p< 2$ in (\ref{tsdelta}).
Therefore the estimates $t_d\geq \alpha_d/2$
and $(\tau+\delta)^p-\tau^p>p\delta \tau^{p-1}$
for $\tau=t_d$ or $\tau=2t_d$ yield (\ref{tsdelta}).
In turn we conclude Lemma~\ref{steinerstab}.
\end{proof}

\section{Stability of the minimum inequalities}

We are going to use Vinogradov's $\gg$ notation in the following sense: $f \gg
g$ or $g\ll f$ for non-negative functions $f$ and $g$
iff there exists a constant $c>0$ depending only on $d$, for which $f \geq c g$ holds.
In addition, we write $h=O(f)$ if $|h|\ll f$.

We will say that a convex body $K \subset \R^d$ is in {\em John position}, if
its unique inscribed ellipsoid of maximal volume is $B^d$.
 We are going to use the following simple consequence of Fritz John's
ellipsoid theorem (see \cite{Joh48} and \cite{Bal97}).

\begin{prop}\label{john}
Assume that the $o$-symmetric convex body $K \subset \R^d$ is in John
position. Then for any point $p \in S^{d-1}$, there is a contact point $q$
between $K$ and $B^d$, for which $\la p,q \ra \geq 1/\sqrt{d} $.
\end{prop}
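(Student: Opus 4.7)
The plan is to invoke the resolution-of-identity form of Fritz John's theorem. Since $K$ is $o$-symmetric and $B^d$ is the John ellipsoid of $K$, there exist contact points $u_1,\ldots,u_m\in \bd K\cap S^{d-1}$ and positive weights $c_1,\ldots,c_m$ satisfying
$$\sum_{i=1}^m c_i\, u_i u_i^\top = I_d.$$
For symmetric bodies no centering condition $\sum c_i u_i = 0$ is needed, which simplifies matters. Taking the trace of this identity yields $\sum_{i=1}^m c_i = d$, and evaluating the corresponding quadratic form at $p\in S^{d-1}$ gives $1 = \la p,p\ra = \sum_{i=1}^m c_i\la u_i,p\ra^2$.

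Next I would argue by contradiction: suppose every contact point $q$ satisfied $\la p,q\ra < 1/\sqrt d$. The central symmetry of $K$ ensures that $-u_i$ is a contact point whenever $u_i$ is, so the assumption would force $|\la u_i,p\ra| < 1/\sqrt d$, and therefore $\la u_i,p\ra^2 < 1/d$, for each $i$. Combining this with the identity and the trace bound would yield
$$1 = \sum_{i=1}^m c_i\la u_i,p\ra^2 < \frac{1}{d}\sum_{i=1}^m c_i = 1,$$
a contradiction. Hence some contact point $u_i$ satisfies $|\la u_i,p\ra|\ge 1/\sqrt d$, and taking $q=u_i$ or $q=-u_i$ according to the sign of $\la u_i,p\ra$ produces the desired contact point.

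There is no serious obstacle in this argument; it is a one-line trace/pigeonhole computation once John's theorem is in hand. The only subtlety is remembering to exploit the central symmetry of $K$ to flip signs, so that an inner product bound of the correct (unsigned) form promotes to the signed inequality $\la p,q\ra\ge 1/\sqrt d$ demanded by the statement.
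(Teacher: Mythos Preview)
Your proof is correct. The paper gives essentially no argument beyond remarking that the proposition is equivalent to the well-known fact that every point of an $o$-symmetric body in John position has norm at most $\sqrt d$; since that fact is proved exactly via the decomposition $\sum_i c_i\,u_iu_i^\top=I_d$ you invoke, your approach is the same one, just spelled out explicitly.
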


The statement is equivalent to the well-known fact that any point in $K$ has
norm at most $\sqrt{d}$.

We will use the following notations. Let $K$ be a convex body in $R^d$. Let $H$
be a hyperplane of $R^d$ with normal $v$. Let $\ell$ be the line of direction
$v$, and for any $x \in H$, denote by $\sigma(x)$ the secant $K \cap (x +
\ell)$, and by $M(x)$ the midpoint of $\sigma(x)$. Moreover, let $m(x)$ be the
signed distance of $x$ and $M(x)$, that is,  $m(x) = \la M(x) - x, v \ra$.

Now, for Theorem~\ref{ellipsoidstab}. First, we deal with the case when $K$ is
$o$-symmetric and its Banach-Mazur distance from $B^d$ is sufficiently
small. This is the core of the proof.

\begin{lemma}
\label{ellstablocal} For any $d\geq 2$, there exists  $\eps_0,\hat{\gamma}>0$,
such that if $K\subset \R^d$ is an $o$-symmetric convex body in John position,
and the maximal norm of the points of $K$ is $1+\eps$ with $\eps\leq\eps_0$,
then for any $p \geq 1$,
\begin{align*}
\E^p_o(K) - \E^p_o(B^d) &\geq \hat{\gamma}^p\eps^{(d+3)/2} \textrm{, and}\\
\E^p_{d+1}(K) - \E^p_{d+1}(B^d) &\geq \hat{\gamma}^p \eps^{(d+3)/2}.
\end{align*}
\end{lemma}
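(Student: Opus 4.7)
The plan is to exhibit a single Steiner symmetrisation of $K$ through a carefully chosen hyperplane $H\ni o$ that decreases $\E^p_o$ and $\E^p_{d+1}$ by at least $\hat\gamma^p\eps^{(d+3)/2}$ times the corresponding reference values; Theorems~\ref{CCG} and~\ref{ellipsoid} applied to the symmetral $K_H$ then yield both claimed inequalities. The hypotheses give $B^d\subseteq K\subseteq(1+\eps)B^d$, so I fix $z\in\partial K$ with $\|z\|=1+\eps$ and set $w:=z/(1+\eps)\in S^{d-1}$. Proposition~\ref{john} produces a contact point $q\in\partial K\cap S^{d-1}$ satisfying $\la w,q\ra\geq1/\sqrt d$. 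I would take the direction $v:=(z+q)/\|z+q\|$ and hyperplane $H:=v^\perp$; the key geometric feature is that the $v$-chord of $K$ through (the projection onto $H$ of) the midpoint of $[-q,z]$ has one endpoint essentially at $z$, where $K$ bulges beyond $B^d$ by $\eps$, while the other endpoint is at $-q$, where $K$ is tangent to $B^d$ and \emph{cannot} extend beyond $-q$ in the $-v$ direction.

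For $y\in H$ write $t_\pm(y)$ for the $v$-chord endpoints of $K$ through $y$ and let $m(y)=(t_+(y)+t_-(y))/2$ be the midpoint's signed distance from $H$. The inclusion $K\supseteq\conv(B^d\cup\{\pm z\})$ produces an upper-chord extension $t_+(y)-\sqrt{1-\|y\|^2}\gg\eps$ on the \emph{bump shadow} $S\subset H$, namely the $\pi_v$-image of the cone $\conv(B^d\cup\{z\})$. Since $-q$ is a contact point and $K\subseteq(1+\eps)B^d$, on the subshadow $S'\subset S$ of chords whose lower endpoint is near $-q$, the lower endpoint $t_-(y)$ agrees with the $B^d$-value $-\sqrt{1-\|y\|^2}$ up to an acceptable error. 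A direct computation of the cone projection shows that $S'$ has $(d-1)$-dimensional measure $\gg\eps^{(d-1)/2}$ (roughly $\sqrt\eps$ in each of the $d-1$ directions within $H$, reflecting the base radius $\sqrt\eps$ of the bump cone), and on $S'$ we have $|m(y)|\gg\eps$.

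With this asymmetry I would invoke Lemma~\ref{steinerstab} via Fubini over the projections $(y_1,\ldots,y_d)\in H^d$ of the simplex vertices. Applied fibrewise with $\delta=m(y_d)$, after using \eqref{translateo} and \eqref{translateall} to centre the remaining chords, part~(i) of the lemma produces a pointwise gain
\[
\gg m(y_d)^2\,\alpha_1\cdots\alpha_{d-1}\,\alpha_d^{p-1}\,V_{d-1}(\pi_v[o,y_1,\ldots,y_{d-1}])^p,
\]
where $2\alpha_i$ is the chord length through $y_i$. Integrating $m^2$ over $y_d\in S'$ yields $\gg\eps^2\cdot\eps^{(d-1)/2}=\eps^{(d+3)/2}$, while integrating the remaining factors over $y_1,\ldots,y_{d-1}\in\pi_v K\approx B^{d-1}$ produces a dimensional constant; the lemma's prefactor $p\,2^{d-p}/d^p$ is absorbed into $\hat\gamma^p$. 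Part~(ii), with an additional integration over $x_0\in K$, handles the $\E^p_{d+1}$ statement analogously.

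The main obstacle is the shadow analysis of the previous paragraph: establishing $|m(y)|\gg\eps$ on a set of $(d-1)$-dimensional measure $\gg\eps^{(d-1)/2}$ for a \emph{general} $o$-symmetric $K$, not merely for the model body $\conv(B^d\cup\{\pm z\})$. The worry is that additional bumps of $K$ could enlarge $t_-(y)$ and cancel the midpoint shift, but the combination of $o$-symmetry with the contact-point rigidity at $-q$ provided by Proposition~\ref{john} prevents such cancellation along the critical chord. This rigidity is precisely why the exponent is $(d+3)/2$ rather than the weaker $(d+3)$ of the general (non-symmetric) case.
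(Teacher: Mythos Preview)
Your architecture matches the paper's: one Steiner symmetrisation in the direction joining the bump point to (the antipode of) a nearby contact point, then Lemma~\ref{steinerstab} fibrewise, with $\eps^{(d-1)/2}$ coming from the measure of the bump shadow and $\eps^2$ from the $\delta^2$ in that lemma. The gap is in your treatment of the projections $y_1,\dots,y_{d-1}$, and this is exactly where the paper's main idea lives. After centring the chords through $y_1,\dots,y_{d-1}$ via \eqref{translateo} (keeping $o$ fixed), the shift $\delta$ that enters Lemma~\ref{steinerstab}(i) is \emph{not} $m(y_d)$: it is $m(y_d)-d(y_1,\dots,y_d)$, where $d$ is the height at $y_d$ of the linear function on $H$ interpolating $0$ at $o$ and $m(y_i)$ at each $y_i$. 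The same sandwich $B^d\subset K\subset(1+\eps)B^d$ that gives $m(x_d)\asymp\eps$ also gives $|m(y_i)|\leq\eps/(2\sqrt{1-\|y_i\|^2})$ for every $i$, so for generic $y_1,\dots,y_{d-1}$ the interpolant $d$ can match $m(y_d)$ and the effective $\delta$ can vanish. Integrating over all of $(\pi_vK)^{d-1}$ therefore does not yield a positive dimensional constant; and the hypothesis \eqref{intcond}, which essentially asks that $y_d\in[o,y_1,\dots,y_{d-1}]$, is not verified either.

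The paper resolves this by placing $y_1,\dots,y_{d-1}$ at norm $\approx 1/\sqrt2$, as the vertices of a regular $(d-2)$-simplex centred on the ray $o\,x_d$ so that $x_d\in[o,y_1,\dots,y_{d-1}]$. The value $1/\sqrt2$ is where the tangent from the origin touches the graph of $f(r)=1/\sqrt{1-r^2}$; together with the bound $\|x_d\|<1/\sqrt2-1/(4\sqrt d)$ (this is precisely what Proposition~\ref{john} buys) and the convexity of $f$, one gets $d(y_1,\dots,y_d)\lesssim\|x_d\|\eps<m(x_d)-c\eps$ for a dimensional $c>0$. Small balls $X_i$ around these positions then serve as the integration domains for $y_1,\dots,y_{d-1}$, each of constant measure, and \eqref{intcond} follows from the containment. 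Your closing paragraph flags the wrong obstacle: controlling $m$ on a $\sqrt\eps$-neighbourhood $S'$ of $x_d$ is the easier half (the bump cone $[B^d,z]$ plus the supporting hyperplane at $-q$ give a linear lower bound there, as you sketch); the real work is controlling the interpolant $d$. The $\E^p_{d+1}$ case needs additional care in placing $y_0$, and the paper splits into two subcases according to whether $\|x_d\|$ is bounded away from $0$.
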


\begin{proof} Let $r$ be a point of $K$ of maximal
norm. By Proposition~\ref{john}, there is a contact point $q\in\partial K\cap
S^{d-1}$ with $\langle-r,q\rangle\geq \|r\|/\sqrt{d}$. Let $\ell$ be the line
passing through
$r,q$ with direction vector $v=(r-q)/\|r-q\|$, let $H = v^\perp$, and choose
a coordinate system such that the $d$th coordinate axis is parallel to $\ell$.
 Taking $x_d=\pi_v r=\pi_vq$, a
simple calculation shows that
\begin{equation}\label{xdsize}
\|x_d\|< \frac{1}{\sqrt{2}}-\frac{1}{4 \sqrt{d}}\;.
\end{equation}
For any $x \in H\cap B^d$, let $\sigma(x)= K \cap (x+\ell)$ with midpoint
$M(x)$, and define $m(x) = \la (M(x)-x),v\ra$. Since $B^d \subset K \subset (1+
\eps) B^d$, if $\|x\|\leq 0.9$, then $m(x)$ can be estimated as
\begin{equation}\label{midest}
|\,m(x)| \leq \frac{\sqrt{(1+ \eps)^2 - \|x\|^2} - \sqrt{1-\|x\|^2}}{2}
= \frac{\eps(1+O(\eps))}{2 \sqrt{1-\|x\|^2}}.
\end{equation}
Note that for $x=x_d$, equality holds
in \eqref{midest}.

The estimating function
is illustrated on Figure~\ref{fv1}.

\begin{figure}[h]
\epsfxsize =7 cm \centerline{\epsffile{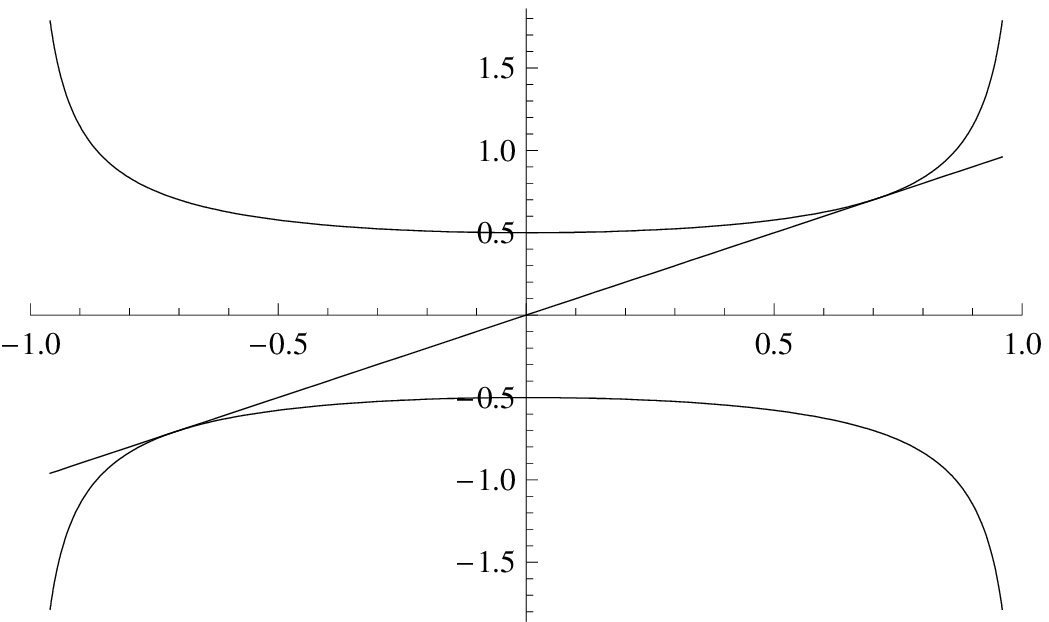}}
  \caption{}
  \label{fv1}
\end{figure}


The tangent from $o$ to the graph of $f(z)=1/\sqrt{1-z^2}$ has its contact
point at $z=1/\sqrt{2}$. Due to the convexity of $f(z)$, estimates
(\ref{xdsize}) and (\ref{midest}) imply that if we choose the points $x_1,
\dots, x_{d-1}$ of norm about $1/\sqrt{2}$ with $x_d \in [o,x_1, \dots,
x_{d-1}]$, then $M(x_d)$ is separated from $[o, M(x_1), \dots, M(x_d)]$ by $c
\eps$, where $c$ is a constant depending on $d$ only. This then yields a
positive error $\E^p_o(K)$ in comparison with $\E^p_o(B^d)$. This idea is
transformed to a quantitative proof as follows.

First, we estimate the decay of $m(x)$ around $x_d$. By convexity, $[B^d,
r\,]\subset K$. Let $\hat{r} = S^{d-1} \cap [o\,, r\,]$, and $\tilde{r}= S^{d-1}
\cap ([q \, r\,] \setminus q)$.  Estimate (\ref{xdsize}) yields that $\|
\hat{r} -\tilde{r}\| \leq \eps$. For $s \in S^{d-1}$, denote by $T(s)$ be the
tangent hyperplane to $S^{d-1}$ at $s$. It is easily obtained that the
intersection $[B^d, r\,] \cap T(\hat{r})$ is a $(d-1)$-dimensional ball of
radius $\sqrt{\eps/(2+\eps)}$, and thus, $A=[B^d, r\,] \cap T(\tilde{r})$
contains a ball of radius $\sqrt{\eps/2.5}$ centred at $\tilde{r}$. Then, again
by (\ref{xdsize}), $\pi_v(A)$ contains a ball $D$ of radius $\sqrt{\eps}/4$
centred at $x_d$. Since $T(q)$ is a tangent hyperplane of $K$, $m(x)$ can be
estimated over $D$ linearly:
\begin{equation}
\label{mest} m\left(x_d + t \frac{\sqrt{\eps}}{4} u\right)\geq (1-t) m(x_d), \
\forall \; u \in S^{d-1}, \forall \; t\in[0,1].
\end{equation}

Next, we are going to estimate $\E^p_o(K)-\E_o^p(B^d)$.  Let $x_0=o$, and
choose $x_1, \dots, x_{d-1}$ as follows. Take $\tilde{y} = x_d / \| x_d \|$. If
$d=2$, then let $x_1 = \tilde{y}/\sqrt{2}$. If $d \geq 3$, then take $y =
(1/\sqrt{2}-1/(100d)) \tilde{w}$, and let $x_1, \dots, x_{d-1}$ be of norm
$1/\sqrt{2}-1/(500d)$, the vertices of a regular $(d-2)$-simplex in $(y +
y^{\perp}) \cap H$ with centroid $y$. Note that the distance between any two of
these
 is $>1/100\sqrt{d}$. Let $\varrho=1/(1000 d)$ and define $X_i = x_i + \varrho B^{d-1}
\subset H$ for every $i=1, \dots, d-1$. Then $V_{d-1}(X_i) \gg 1$.

Note that by (\ref{xdsize}), there exists a neighbourhood $U$ of $x_d$ of
radius $\gg \sqrt{\eps}$ in $H$ such that for any $x_i' \in X_i,\ i=1, \dots,
d-1$, we have $U \subset [o, x_1', \dots, x_{d-1}']$. For such a collection of
$(x_i')$, and for any $x_d' \in U$, define
\[
D((x_i'))= D(x_1', \dots, x_{d-1}',x_d') = [o, M(x_1'), \dots, M(x_{d-1}')]
\cap \sigma (x_d'),
\]
and let $d((x_i')) = \la D((x_i')), v \ra$. Note that for any $x_i' \in X_i$,
$1 \leq i \leq d-1$,
\[
\frac{1}{\sqrt{2}} - \frac{3}{500 d}\leq \| x_i' \| \leq \frac{1}{\sqrt{2}}
-\frac{1}{500d}.
\]
Thus, (\ref{midest}) yields that there exists a neighbourhood $V\subset U$ of
$x_d$ in $H$, still of area $\gg \eps^{(d-1)/2}$, such that for any $x_i' \in
X_i$, $i=1,\dots,d-1$ and any $x_d' \in V$, for sufficiently small $\eps$ we
have
\begin{equation}\label{dxest}
d(x_1', \dots, x_d') \leq \frac {\| x_d \| \eps}{1 - 1/ (100d)}.
\end{equation}
Since
\[
\frac{\eps}{2\sqrt{1-\|x_d\|^2}} - \frac {\| x_d \| \eps}{1 - 1/ (100d)}
\]
as a function of $\|x_d\|$ is decreasing for $\|x_d\|<1/\sqrt{2}$, estimates
(\ref{xdsize}), (\ref{midest}) and (\ref{dxest}) yield that for  $x_i' \in X_i$
and $x_d' \in V$,
\begin{align*}
m(x_d) - d(x_1', \dots, x_d')&\geq \eps \left(\frac{1}{\sqrt{2} +  1/(2
\sqrt{d})} - \frac{1/\sqrt{2} - 1/(4\sqrt{d})}{1-1/(100d)}\right) \\
&\geq \frac{\eps}{20 d} \; .
\end{align*}
Let $R = \sqrt{ 2\eps} / (100d)$, and take $X_d = V \cap (x_d + R B^{d-1})
\subset H$. Then $V_{d-1}(X_d)\gg \eps^{(d-1)/2}$. Moreover, since $m(x_d) <
\eps/\sqrt{2}$, the above estimate and (\ref{mest}) yield that for $x_i' \in
X_i$, $i=1, \dots, d$,
\begin{equation}\label{mfinalest}
m(x_d') - d((x_1', \dots, x_d')) \geq \frac{\eps}{100d}.
\end{equation}
Let now $K'$ be the Steiner symmetral of $K$ with respect to $H$. By
Theorem~\ref{CCG}, it is sufficient to prove that $\E^p_o(K)-\E^p_o(K') \geq
\hat\gamma^p \eps^{(d+3)/2}$. We calculate the average volume of random
simplices by integrating along the $d$-tuples of chords of $K$ parallel to $v$.
For $x \in H$, let $\sigma_K(x) = \sigma (x) = K \cap (x+ \ell)$, and
$\sigma_{K'}(x) = K' \cap (x + \ell)$. For $x_1', \dots, x_d' \in H \cap K$,
define
\begin{multline*}
\omega(x_1', \dots, x_d') = \int_{\sigma_K(x_1')} \dots
\int_{\sigma_K(x_d')}V[o,y_1, \dots, y_d]\,dy_d \dots dy_1\\
- \int_{\sigma_{K'}(x_1')} \dots \int_{\sigma_{K'}(x_d')}V[o,y_1, \dots,
y_d]\,dy_d \dots dy_1
\end{multline*}
Lemma~\ref{steiner} yields that for any $(x'_i)_1^d \subset H \cap K$, we have
$\omega(x_1', \dots, x_d') \geq 0$. Moreover, by the construction of
$(X_i)_1^{d-1}$,  for any $x_i' \in X_i$, we have
$V_{d-1}([o, x_1', \dots, x_{d-1}'])\gg 1$. Thus, by (\ref{mfinalest}),
Lemma~\ref{steiner}, and part (i) of Lemma~\ref{steinerstab},
\[
\E_o^p(K) - \E_o^p(K') \geq \int_{X_1} \dots \int_{X_d} \omega(x_1', \dots,
x_d') \, dx_d' \dots dx_1' \geq \gamma_1^p \eps^{(d+3)/2}
\]
for some $\gamma_1>0$ depending only on $d$.

Next, we estimate $\E^p_{d+1} (K)  - \E^p_{d+1}(K')$. We start as before. There
are  two cases to be considered depending on $\| x_d \|$. First, assume that
$\| x_d \| \geq 1/100$ (we need only $\|x_d\| \gg 1$). Then construct
$(X_i)_1^{d}$ as before. Choose $R>0$ small enough such that the following
hold:
\begin{itemize}
\item[i)] For any $x'_0$ with $\|x_0'\|\leq R$, and any $x'_i \in X_i$, $i=1,
\dots, d$, we have $x'_d \in [x'_0, \dots, x'_{d-1}]$

\item[ii)] For any $x'_0$ with $\|x_0'\|\leq R$ and $m(x'_0) \leq 0$, and any $x'_i \in X_i$, $i=1,
\dots, d$, \begin{equation}\label{mM} m(x'_d) - \la [M(x_0'), \dots,
M(x'_{d-1})]\cap \sigma(x'_d),v\ra \gg \eps.\end{equation}
\end{itemize}
Let $X_0 = \{ x \in H: |\,x| < R, \ m(x)\leq 0\}$. By the symmetry of $K$, the
measure of $X_0$ is at least half as large as that of $R B^{d-1}$, thus,
$V_{d-1}(X_0) \gg 1$. Then, part (ii) of Lemma~\ref{steinerstab} applies as
before, yielding
\[
\E^p_{d+1} (K) - \E^p_{d+1} (K')\geq \gamma_2^p \eps^{(d+3)/2}
\]
for some $\gamma_2>0$ depending only on $d$.

In the second case, $x_d$ is close to the origin: $\| x_d \| < 1/100$. Let $A$
be the annulus $\{x \in H: 1/2<\|x\|<3/4\}$. For this instance, define the
function $d'$ on $A^d$ by
\[
d'(x'_0, \dots, x'_{d-1}) = \la([M(x'_0), \dots, M(x'_{d-1})]\cap \sigma(o))
,v\ra.
\]
Note that by symmetry, $d'(-x'_0, \dots, -x'_{d-1})=-d'(x'_0, \dots,
x'_{d-1})$. Let $C=1/100$, and consider only those $(x'_i)_0^{d-1} \subset A$,
for which $|\la u,v \ra| \geq C$, where $u$ is the normal vector of $[M(x'_0),
\dots, M(x'_{d-1}))]$. Then the (product) measure of these point sets is $\gg
1$; moreover, at least half of them satisfies $d'(x'_0, \dots, x'_{d-1})\leq0$.
These also satisfy \eqref{mM}. Thus, integrating over these sets, the argument
works as before.
\end{proof}

\noindent {\bf Remark}. In the planar case, one can obtain the following
quantitative result: If $K$ satisfies the conditions of
Lemma~\ref{steinerstab}, then for small $\eps>0$,
\[
\E^1_o(K) - \E^1_o(B^2) > \frac{\eps^{5/2}}{400} \; .
\]
From this, it also follows that if $K$ is a centrally symmetric convex disc,
and $\E^1_o(K) \leq (1+ \delta) \E^1_o(B^2)$, then there exists an ellipse $E$,
for which $E \subset K \subset (1 + 20 \delta^{2/5})E$.
\bigskip

To obtain the estimate for not necessarily symmetric bodies, we cite the
following result of the second author, see Theorem~1.4 of \cite{Bor10}.

\begin{lemma}\label{rounding}
For any convex body $K \subset \R^d$ with $\delta_{\rm BM}(K,B^d)> 1+\eps$ for
some  $\eps>0$, there exists an $o$-symmetric convex body $C$ with axial
rotational symmetry and a constant $\gamma>0$ depending only on $d$, such that
$\delta_{\rm BM}(C,B^d)>\gamma\eps^2$, and $C$ results from $K$ as a limit of
subsequent Steiner symmetrisations and affine transformations.
\end{lemma}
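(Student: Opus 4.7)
The plan proceeds in three phases.

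\emph{Phase 1 (normalisation).} I first apply an affine transformation to bring $K$ into John position, so that $B^d\subset K\subset dB^d$ with $B^d$ the maximal-volume inscribed ellipsoid. The hypothesis $\delta_{\rm BM}(K,B^d)>1+\eps$ combined with $B^d\subset K$ forces $K\not\subset(1+\eps)B^d$, since otherwise $B^d\subset K\subset(1+\eps)B^d$ would give $\delta_{\rm BM}(K,B^d)\leq 1+\eps$. Hence there exists $r\in K$ with $\|r\|>1+\eps$, and a rotation aligns $r$ with the positive $e_d$-axis, yielding $r=r_\ast e_d$ with $r_\ast>1+\eps$.

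\emph{Phase 2 (symmetrisation).} I then apply Steiner symmetrisations along a countable family of hyperplanes through the $e_d$-axis that is dense in the set of all such hyperplanes. Each step fixes $B^d$ (which is symmetric about every hyperplane through the origin), fixes $r_\ast e_d$ pointwise, preserves containment in $dB^d$, and preserves volume. Passing to the Hausdorff limit gives a convex body $C_0$ invariant under every rotation of $\R^d$ fixing $e_d$, i.e.\ axially rotationally symmetric. Applying one further Steiner symmetrisation about $e_d^\perp$ produces $C$. Since this last operation acts chord-by-chord on lines parallel to $e_d$, while rotations fixing $e_d$ permute such lines rigidly, the two commute, so $C$ retains axial symmetry about $e_d$ and is additionally symmetric under reflection in $e_d^\perp$, which together yield $o$-symmetry. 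The axial chord of $C_0$ has length at least $1+r_\ast>2+\eps$, so after centring one obtains $\pm(1+\eps/2)e_d\in C$, while $B^d\subset C$ is maintained throughout.

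\emph{Phase 3 (estimation).} The main obstacle is to derive the lower bound $\delta_{\rm BM}(C,B^d)>1+\gamma\eps^2$. The bare data $B^d\subset C$ and $(1+\eps/2)e_d\in C$ do not suffice in isolation, since an ellipsoid of revolution with equatorial semi-axis $1$ and polar semi-axis $1+\eps/2$ satisfies both but has $\delta_{\rm BM}(\cdot,B^d)=1$. To rule out $C$ being too close to such an ellipsoid, one has to propagate through the construction an affinely invariant measure of non-ellipsoidality, for instance some normalised form of $\E^p_*(K)$, which by Theorem~\ref{ellipsoid} vanishes only on ellipsoids and which, by Theorem~\ref{CCG}, is controlled under Steiner symmetrisation. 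Interleaving affine renormalisations (such as re-imposing John position after each symmetrisation) with the symmetrisation steps allows one to bound how much of the original non-ellipsoidality is dissipated. The quadratic loss $\eps\mapsto\eps^2$ reflects the second-order behaviour of smooth affine inequalities near their equality case: the first-order Banach-Mazur deficit $\eps$ of $K$ gives a second-order deficit of size $\eps^2$ in the non-ellipsoidality functional, and this $\eps^2$ deficit survives symmetrisation to yield the asserted lower bound on $\delta_{\rm BM}(C,B^d)-1$.
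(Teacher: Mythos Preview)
The paper does not prove this lemma at all: it is quoted verbatim as Theorem~1.4 of \cite{Bor10}, so there is no ``paper's own proof'' to compare against beyond that citation. Your Phases~1 and~2 set up a reasonable construction of an $o$-symmetric body of revolution $C$ containing $B^d$ and the points $\pm(1+\eps/2)e_d$. You also correctly identify that this alone cannot give the Banach--Mazur lower bound.

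Phase~3, however, is not a proof but a heuristic, and the mechanism you propose does not work. Two concrete problems:

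\emph{Circularity.} You assert that ``the first-order Banach--Mazur deficit $\eps$ of $K$ gives a second-order deficit of size $\eps^2$ in the non-ellipsoidality functional'' $\E^p_*(K)-\E^p_*(B^d)$. But an inequality of the form $\E^p_*(K)-\E^p_*(B^d)\geq c\,\eps^a$ for general (non-symmetric) $K$ is precisely the content of Theorem~\ref{ellipsoidstab}, and Lemma~\ref{rounding} is the tool the paper uses to \emph{reduce} Theorem~\ref{ellipsoidstab} to the $o$-symmetric case (Lemma~\ref{ellstablocal}). You cannot invoke the general stability of $\E^p_*$ inside the proof of the reduction lemma that underlies it.

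\emph{Wrong monotonicity.} Theorem~\ref{CCG} tells you that $\E^p_*$ \emph{decreases} under Steiner symmetrisation; it gives no lower bound on $\E^p_*(C)$ in terms of $\E^p_*(K)$. Your sentence ``interleaving affine renormalisations (such as re-imposing John position after each symmetrisation)\ldots allows one to bound how much of the original non-ellipsoidality is dissipated'' cannot rescue this: $\E^p_*$ is affine invariant, so re-imposing John position leaves it unchanged and provides no control on the loss. The claim that ``this $\eps^2$ deficit survives symmetrisation'' is exactly the missing content.

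What the argument in \cite{Bor10} actually uses is direct geometry: the John contact-point structure (a contact point $q\in S^{d-1}\cap\partial K$ not too close to $\pm e_d$, together with the supporting hyperplane of $K$ at $q$) survives the axial Steiner symmetrisations in a quantifiable way, pinning the resulting body of revolution between $B^d$ and a non-ellipsoidal envelope. That is where the $\eps\mapsto\eps^2$ loss genuinely appears, and it is obtained without reference to $\E^p_*$.
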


Now, we are ready to prove the general result.

\begin{proof}[Proof of Theorem~\ref{ellipsoidstab}]
Let $ \delta_{\rm BM}(K,B^d) =1+\delta $.
By Lemma~\ref{rounding}, we may assume that $K$ is an
$o$-symmetric convex body in John position, provided we prove
\begin{align}
\label{goal*}
\E^p_o(K)&\geq(1+\gamma^p\delta^{\frac{d+3}2}) \E^p_o(B^d) \textrm{, and}\\
\label{goald}
\E^p_{d+1}(K)&\geq(1+\gamma^p\delta^{\frac{d+3}2}) \E^p_{d+1}(B^d)
\end{align}
for $\gamma>0$ depending only on $d$.
Let the maximal norm of points of $K$
be $1+\eps$. Since the volume of $K\setminus B^d$ is $\gg \eps^{(d+1)/2}$,
it follows that
\begin{equation}
\label{deltaeps}
\gamma_0\varepsilon^{\frac{d+1}2}\leq  \delta  \leq \varepsilon
\end{equation}
for $\gamma_0>0$ depending only on $d$.

Let $\eps_0$ and $\hat{\gamma}$ come from Lemma~\ref{ellstablocal}.
If $\delta \leq \delta_0=\gamma_0\varepsilon_0^{\frac{d+1}2}$ then
$\varepsilon \leq \varepsilon_0$ by \eqref{deltaeps},
and hence we have \eqref{goal*} and \eqref{goald}
with $\gamma=\hat{\gamma}$ by Lemma~\ref{ellstablocal}
and \eqref{deltaeps}.

Therefore we may assume that $\delta >  \delta_0$. Choose a sequence of Steiner
symmetrals $K_0,K_1, K_2, \dots$ starting with $K=K_0$ that converge to $B^d$,
and hence there exists $K_n$ such that $\delta_{BM}(K_{n+1})\leq \delta_0<
\delta_{BM}(K_n)$. Let $L_t : t \in[-1,1]$ be the linear shadow system with
$L_1=K_n$ and $L_0=K_{n+1}$  corresponding to the Steiner symmetrisation of
$K_n$ (see Section~\ref{secshadow}), thus  there exists $t\in[0,1)$ such that
$\delta_{BM}(L_t)= \delta_0$. It follows that
 $\E^p_*(L_t)\leq \E^p_*(K_n)\leq \E^p_*(K)$ and
$\E^p_{d+1}(L_t)\leq \E^p_{d+1}(K_n)\leq \E^p_{d+1}(K)$,
thus we conclude \eqref{goal*} and \eqref{goald}
by the  previous case and $\delta<\sqrt{d}$.
\end{proof}

We made no attempt to find the best possible constants. However, the estimate
$\eps^{(d+3)/2}$ for centrally symmetric $K$ is close to the truth: if $K=[r,
-r, B^d]$, where $r$ is of norm $1+\eps$, then
\[
\frac{\E^p_o(K)}{\E^p_o(B^d)} - 1 \ll \eps^{(d+1)/2.}
\]

\section{Stability of the maximum inequalities in the plane}
\label{secplane}

Since here we work only on the plane, a {\em convex disc} means a planar convex
body, and $A(K)=V_2(K)$ is the area of $K$.  For a polygon $\Pi$ with at least
four vertices $q_1,\ldots,q_k$ in this order, a {\em basic linear shadow
system} at $q_1$, basic system for short, is defined as follows. Let $q'_1$ and
$q''_1$ be points different from $q_1$ such that $q_1\in [q'_1,q''_1]$,
$q'_1-q''_1$ is parallel to $q_2-q_k$, and $q_2,\ldots,q_k$ lie on the boundary
of $\Pi'=[q'_1,q_2,\ldots,q_k]$ and $\Pi''=[q''_1,q_2,\ldots,q_k]$. The
corresponding basic system is the unique linear shadow system $\Pi_t$,
$t\in[-\beta,\alpha]$,  such that $\alpha,\beta>0$, $\alpha+\beta=1$,
$\Pi_{-\beta}=\Pi''$, $\Pi_{0}=\Pi$, and $\Pi_{\alpha}=\Pi'$. In this case, the
generating vector is parallel to $q_2-q_k$, and the speed of any point in
$[q_2,\ldots,q_k]$ is zero. It follows from Theorem~\ref{CCG} that for any
$n\geq 3$ and $p\geq 1$,
\begin{equation}
\label{basicmax}
\E^p(\Pi)<\max\{\E^p(\Pi'),\E^p(\Pi'')\},
\end{equation}
where $\E^p(\Pi)$ stands either for $\E^p_o(\Pi)$ or $\E^p_n(\Pi)$.
 More precisely, the following holds:
\begin{equation}
\label{basicdef}
\mbox{ $\E^p(\Pi_{-t})$ on $[0,\beta]$, or $\E^p(\Pi_t)$ on
$[0,\alpha]$, is strictly increasing.}
\end{equation}

For a convex disc $K$, let $T_K$ be  a triangle of maximal area contained in
$K$. It follows that the triangle, the midpoints of whose sides are the
vertices of $T_K$, contains $K$. In particular, $A(K)<4 A(T_K)$.

First, we reduce the case to polygons with at most 6 vertices.

\begin{prop}
\label{trianglehexagon}
For a convex disc $K$, let $\widetilde{T}$ be
the triangle, the midpoints of whose sides are the vertices of $T_K$.
 For $n\geq 3$ and $p\geq 1$,  there exist polygons
$\Pi_1$ and $\Pi_2$ with $A(\Pi_1)=A(\Pi_2)=A(K)$ such that $T_K\subset
\Pi_1,\Pi_2$, all vertices of $\Pi_1,\Pi_2$ are on $\partial \widetilde{T}$,
and
$$
\E^p_n(K)\leq\E^p_n(\Pi_1) \mbox{ \ and \ }\E^p_*(K)\leq\E^p_*(\Pi_2).
$$
\end{prop}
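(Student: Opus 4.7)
My plan is to establish Proposition~\ref{trianglehexagon} by a maximality argument over the class
\[
\mathcal{C}=\{M\text{ a convex disc}\,:\,T_K\subset M\subset\widetilde{T},\ A(M)=A(K)\}.
\]
This class is compact in the Hausdorff metric, and $K\in\mathcal{C}$ (since $K\subset\widetilde{T}$, as stated in the text preceding the proposition, and $T_K\subset K$ by construction of $T_K$). Both functionals $\E^p_n$ (for $n\geq 3$) and $\E^p_*$ are continuous on $\mathcal{C}$ --- the latter after translating so that $\gamma(M)=o$, which is permitted by the affine-invariance of $\E^p_*$ --- so each attains a maximum at some $M^*\in\mathcal{C}$. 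It then suffices to show $M^*$ may be chosen to be a polygon all of whose vertices lie on $\partial\widetilde{T}$: taking $\Pi_1$ and $\Pi_2$ as the corresponding maximizers would then yield $\E^p_n(K)\leq\E^p_n(\Pi_1)$ and $\E^p_*(K)\leq\E^p_*(\Pi_2)$.

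The argument has two stages. First I would show $M^*$ is polygonal: if $\partial M^*$ contained a non-degenerate strictly convex arc $\gamma$ in $\mathrm{int}(\widetilde{T})$, I would inscribe $M^*$ in a polygon containing $v_a,v_b,v_c$ and a vertex $q$ in $\gamma$; applying the basic shadow system at $q$ together with Theorem~\ref{CCG} and the strict inequality \eqref{basicmax}, I obtain a nearby polygon with strictly larger $\E^p$, and quantifying this gain and passing to the Hausdorff limit (with a small area-preserving rescaling) contradicts the maximality of $M^*$. Second, assume $M^*$ is a polygon and some vertex $q_j$ lies in $\mathrm{int}(\widetilde{T})$. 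Because $T_K\subset M^*$, such a $q_j$ lies in one of the open corner regions of $\widetilde{T}\setminus T_K$, so the cap $[q_{j-1},q_j,q_{j+1}]$ is disjoint from $T_K$. Apply the basic shadow system $P_t$ at $q_j$ with range $[-\beta,\alpha]$ and generating direction parallel to $q_{j+1}-q_{j-1}$, and let $r_1,r_2$ denote the intersections of the shadow line through $q_j$ with $\partial\widetilde{T}$. Since the shadow preserves both area and $T_K$ pointwise, $P_t\in\mathcal{C}$ exactly on the subinterval $[t_1,t_2]\subset[-\beta,\alpha]$ on which $q_j(t)\in[r_1,r_2]$. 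If $[t_1,t_2]=[-\beta,\alpha]$, then \eqref{basicmax} directly contradicts the maximality of $M^*$. Otherwise $0\in(t_1,t_2)$ and both $P_{t_1},P_{t_2}\in\mathcal{C}$; maximality gives $\E^p(P_{t_i})\leq\E^p(P_0)$, and the chord inequality at $t=0$ for the convex function $t\mapsto\E^p(P_t)$ (Theorem~\ref{CCG}) forces $\E^p(P_{t_1})=\E^p(P_{t_2})=\E^p(P_0)$, whence $\E^p$ is constant on $[t_1,t_2]$. Hence $P_{t_2}$ is another maximizer, now with $q_j\in\partial\widetilde{T}$; replacing $M^*$ by $P_{t_2}$ strictly reduces the number of vertices in $\mathrm{int}(\widetilde{T})$. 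Iterating over all such vertices produces the required $\Pi_1$ and $\Pi_2$.

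The main obstacle is the first stage --- showing that the maximizer is a polygon. Basic shadow systems act naturally on polygons, so the perturbation argument must begin with a polygonal approximation, and one has to quantify the strict $\E^p$-gain coming from the strict convexity of $\gamma$ and from \eqref{basicmax} carefully enough that it survives both the Hausdorff limit and the rescaling enforcing $A(M)=A(K)$. A secondary technical point is the triangle case in the iteration: if during the vertex-pushing the polygon collapses to a triangle with vertices still in $\mathrm{int}(\widetilde{T})$, the basic shadow (which requires at least four vertices) no longer applies, and one must instead use a general linear shadow system with speed supported along a single chord through an offending vertex, appealing to Theorem~\ref{CCG} and rerunning the same maximality/convexity dichotomy to push that vertex onto $\partial\widetilde{T}$.
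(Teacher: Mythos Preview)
Your second stage---pushing interior vertices to $\partial\widetilde{T}$ via basic shadow systems and the convexity from Theorem~\ref{CCG}---is exactly the paper's argument. The real divergence is in how the two proofs arrive at a polygon to begin with. The paper does not try to show that a maximiser over all of $\mathcal{C}$ is polygonal. Instead it first invokes continuity to reduce to the case where $K$ itself is a polygon with, say, $k$ vertices, and then maximises over the \emph{finite-dimensional} compact class of polygons $P$ with at most $k$ vertices satisfying $T_K\subset P\subset\widetilde{T}$ and $A(P)=A(K)$. In that class a maximiser is automatically a polygon, so your first stage simply disappears; one passes back to general $K$ by taking limits of the resulting hexagons $\Pi_m$ (which have at most six vertices, all on $\partial\widetilde{T}$).

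Your first stage, as sketched, does have a genuine gap. If you inscribe polygons $P_n\to M^*$ with a vertex $q$ on the strictly convex arc $\gamma$, then as $P_n\to M^*$ the neighbours of $q$ in $P_n$ collapse toward $q$, so the range $[-\beta_n,\alpha_n]$ of the basic system at $q$ shrinks to zero and the strict gain from \eqref{basicmax} need not survive the limit. Keeping the neighbours of $q$ at a fixed positive distance forces $P_n$ to stay bounded away from $M^*$ in Hausdorff distance, so you cannot recover $\E^p(M^*)$ that way either. The ``small area-preserving rescaling'' is also problematic: a homothety that fixes $A(P_n')=A(K)$ will in general destroy the inclusions $T_K\subset P_n'\subset\widetilde{T}$, so the rescaled polygon need not lie in $\mathcal{C}$. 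None of this is fatal to the overall statement, but it means your route requires substantially more work than the paper's, which sidesteps the issue entirely by restricting to polygons from the outset.

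Two minor remarks. First, the constraint $T_K\subset P_t$ in your second stage is in fact automatic: since each $p_i\in\partial\widetilde{T}$ while $q_j\in\mathrm{int}(\widetilde{T})$, no $p_i$ lies in the relative interior of an edge incident to $q_j$, so every $p_i$ sits in the fixed part $[q_1,\ldots,q_{j-1},q_{j+1},\ldots,q_k]$ of the shadow. Second, your triangle worry is unnecessary: any triangle $M$ with $T_K\subset M\subset\widetilde{T}$ and $A(M)>A(T_K)$ must have some $p_i$ in the relative interior of an edge of $M$, which forces that edge onto a side of $\widetilde{T}$; chasing the remaining $p_j$'s then shows all three vertices of $M$ lie on $\partial\widetilde{T}$.
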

\begin{proof} We may assume that $K$ is not a triangle, and by continuity, that $K$ is
a polygon. However, for $k\geq 4$, suitable basic systems and \eqref{basicmax}
yield that among polygons $P$ of at most $k$ vertices with fixed area such that
$T_K\subset P\subset\widetilde{T}$, any polygon maximising either $\E^p_n(P)$
or $\E^p_*(P)$ has all of its vertices in $\partial \widetilde{T}$.
\end{proof}
The core lemma comes.

\begin{lemma}
\label{localmax}
There exist positive absolute constants $\varepsilon_0,\hat{c}$
such that if $p\geq 1$, and  $A(K)=(1+\varepsilon)A(T_K)$ for a convex disc $K$
and $\varepsilon\in(0,\varepsilon_0]$, then
$$
\E^p_3(K)\leq(1-\hat{c}^p\varepsilon^2)\E^p_3(T^2) \mbox{ \ and \
}\E^p_*(K)\leq(1-\hat{c}^p\varepsilon^2)\E^p_*(T^2).
$$
\end{lemma}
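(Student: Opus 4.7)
The plan is to reduce to a near-triangle quadrilateral via shadow systems, and then to quantify the strict convexity along the basic shadow. By Proposition~\ref{trianglehexagon} I may assume $K = \Pi$ is a polygon with $T_K \subset \Pi \subset \widetilde{T}$, all vertices on $\partial\widetilde{T}$, and $A(\Pi) = (1+\eps)A(T_K)$. The excess $\Pi\setminus T_K$ lies in the three corner triangles $\Delta_i$ of $\widetilde{T}\setminus T_K$; by pigeonhole some $\Delta_1$ contains a bulge of area at least $\eps A(T_K)/3$. Via iterated basic shadow systems at vertices of $\Pi$ (which by \eqref{basicmax} only increase $\E^p$), I reduce to a quadrilateral $\Pi^* = [v_1, v_2, q, v_3]$ with $q \in \partial\widetilde{T}\cap\Delta_1$ and bulge $B := [v_2, q, v_3]$ of area comparable to $\eps A(T_K)$. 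Normalising $T_K = T^2$ by affine invariance, take $v_1 = (0,0),\, v_2 = (1,0),\, v_3 = (0,1)$ and $q = (1,s)$ with $s$ of order $\eps$.

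I then apply the basic shadow system at $q$ in $\Pi^*$ with direction $v$ parallel to $v_3 - v_2$, producing a family $\Pi_t$, $t \in [-\beta, \alpha]$, with $\Pi_0 = \Pi^*$ and endpoints $\Pi_{-\beta}, \Pi_\alpha$ being triangles of area $A(\Pi^*)$. Explicit computation gives $\alpha$ of order $1$ and $\beta$ of order $\eps$, so $\alpha\beta$ is of order $\eps$. Since all triangles have the same $\E^p$, the convex function $f(t) := \E^p(\Pi_t)$ from Theorem~\ref{CCG} satisfies $f(-\beta) = f(\alpha) = V := \E^p(T^2)$.

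The quantitative heart of the proof is a lower bound $f''(t) \gtrsim_p \eps$ on a subinterval of length of order $1$. Using the measure-preserving parametrisation of the shadow,
\[
f(t) = \frac{2^{-p}}{A(\Pi^*)^{p+3}}\int_{(\Pi^*)^3} |D_0 + t D_1|^p\, dx_0\, dx_1\, dx_2,
\]
where $D_0 = \det(x_1-x_0, x_2-x_0)$ and $D_1$ is linear in the shadow speeds $\varphi(x_i)$. Since $\varphi \equiv 0$ on $T_K$, $D_1 \ne 0$ requires some $x_i \in B$; in the dominant case $x_0 \in B$, $x_1, x_2 \in T_K$, one has $D_1 = -\varphi(x_0)\det(x_1-x_2, v)$. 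Normalising $\varphi(q) = 1$ and using the linear profile of $\varphi$ across $B$ gives $\int_B \varphi(x_0)^2\, dx_0$ of order $\eps$, which via $f''(t) = p(p-1)\, c_p \int_{(\Pi^*)^3} |D_0 + tD_1|^{p-2} D_1^2\, dx$ (distributional and handled by coarea when $p = 1$) yields the required lower bound.

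Finally, the convex-function identity
\[
V - f(0) = \frac{1}{\alpha+\beta}\Bigl[\alpha\!\int_{-\beta}^0 (s+\beta)\, f''(s)\, ds + \beta\!\int_0^\alpha (\alpha - s)\, f''(s)\, ds\Bigr]
\]
combined with $f'' \gtrsim_p \eps$ yields $V - f(0) \gtrsim_p \alpha\beta\eps$ of order $\eps^2$, hence $\E^p(\Pi^*) \leq (1 - \hat{c}^p\eps^2)\E^p(T^2)$. The same argument handles $\E^p_*$ using \eqref{newshadow} to keep the centroid at the origin. The main obstacle will be the quantitative lower bound $f''(t) \gtrsim_p \eps$: one must carefully extract the factor $\eps$ from the combination of the bulge area and the $\varphi$-profile, and handle the distributional nature of $f''$ for $p = 1$ via the coarea formula.
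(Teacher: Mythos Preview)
Your overall strategy---quantify the strict convexity of $t\mapsto\E^p(\Pi_t)$ along one basic shadow system and extract an $\eps^2$---is the same as the paper's. The serious gap is the reduction step. You assert that iterated basic shadow systems take the hexagon $\Pi$ of Proposition~\ref{trianglehexagon} to a quadrilateral $\Pi^*=[v_1,v_2,q,v_3]$ whose three ``base'' vertices are exactly the vertices of $T_K$, with $q\in\partial\widetilde T$ and bulge area still comparable to $\eps$. This is not justified and is in general false. A basic system at a vertex $x_3$ moves $x_3$ parallel to $x_2-x_4$, and at the endpoint the new vertex typically lies \emph{off} $\partial\widetilde T$; after one such move the polygon need no longer be contained in~$\widetilde T$, and after a second move you have no control over which triangle is maximally inscribed. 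Concretely, take the symmetric hexagon with vertices $(\pm\delta,1),(1,\pm\delta),(\pm\delta,\mp\delta)$ around $T_K=[(0,0),(1,0),(0,1)]$: the basic system at $(1,\delta)$ already produces a vertex at $(1+2\delta+O(\delta^2),-\delta)\notin\widetilde T$. More importantly, the quadrilateral $Q$ you eventually reach may have $A(Q)/A(T_Q)-1$ arbitrarily small compared to $\eps$, in which case your endpoint identity $f(-\beta)=f(\alpha)=\E^p(T^2)$ together with $\alpha\beta\asymp\eps_Q$ yields only $\eps_Q^2$, not $\eps^2$.

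The paper avoids this by performing the quantified step \emph{first}, directly on the hexagon $\Pi$ from Proposition~\ref{trianglehexagon}, using the chordwise estimate of Lemma~\ref{steinerstab}: one basic system at the farthest vertex $x$ has $\alpha\asymp\eps$ (from $\|x-p_1\|\asymp\eps$), and restricting the chord $\sigma(y_2)$ to the short interval $I_2$ of length $\asymp\eps$ near $\pi_v x$ gives a gap $\gtrsim c^p\eps$ per chord and hence $\gtrsim c^p\eps^2$ in total; only \emph{afterwards} are further, unquantified basic systems used to reach a triangle. If you want to keep your second-derivative formulation, you should apply it at the hexagon level rather than trying to reduce first. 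A secondary issue: your formula $f''(t)=p(p-1)c\int|D_0+tD_1|^{p-2}D_1^2\,dx$ breaks down at $p=1$, where $f''$ is a singular measure; ``coarea'' is the right word but you would need to show that the pushforward of $|D_1|\,dx$ under $x\mapsto -D_0/D_1$ has mass $\gtrsim\eps$ on an interval of length $\asymp 1$ inside $[0,\alpha]$, which is not automatic. The paper's Lemma~\ref{steinerstab} handles all $p\ge 1$ uniformly by working with the explicit one-dimensional integrals $\int|t_d+s|^p+|t_d-s|^p\,dt_d$ rather than differentiating.
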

\begin{proof} We first consider $\E^p_*(K)$. Let $T_K=[p_1,p_2,p_3]$, and let
$q_1,q_2,q_3$ be the such that $p_i$ is the midpoint of $[q_j,q_k]$,
$\{i,j,k\}=\{1,2,3\}$. We may assume that each side of $T_K$ is of length one,
and $\gamma(T_K)=o$.

Let $\Pi$ be the polygon provided by Claim~\ref{trianglehexagon}, and let $x$
be the farthest vertex of $\Pi$ from $T_K$.
 We may assume that
$x\in[p_1,q_2]$. It follows that $A([x,p_1,p_3])$ is between $\varepsilon
A(T_K)/6$ and $\varepsilon A(T_K)$, and hence
\begin{equation}
\label{xp1} \varepsilon/6\leq \|x-p_1\|\leq \varepsilon.
\end{equation}

Let us number the vertices of $\Pi$ in such a way  that $x=x_3$, its
neighbouring vertices are $x_2\in[p_1,q_3]$ and $x_4\in[p_3,q_2]$, and the
other neighbours of $x_2$ and $x_4$ are $x_1\in[p_2,q_3]$ and $x_5$,
respectively; see Figure~\ref{triangle}. Here possibly $x_5=x_1$, and either
$x_5\in[p_3,q_1]$, or $x_5\in[p_2,q_1]$. The definition of $x=x_3$ yields that
for any $i=1,2,4,5$ there exists $j\in\{1,2,3\}$ such that
\begin{equation}
\label{xipj} \|x_i-p_j\|\leq   \|x-p_1\|.
\end{equation}

To deform $\Pi$, let $l$ be the line parallel to $x_2-x_4$ passing through $x$,
and let $x'$ and $x''$ be the intersections of $l$ with ${\rm aff}\{x_2,x_1\}$
and ${\rm aff}\{x_4,x_5\}$, respectively. We consider the basic system $\Pi_t$,
$t\in[-\beta,\alpha]$, $\alpha,\beta>0$, $\alpha+\beta=1$, where $\Pi_0=\Pi$,
$x'$ is a vertex of $\Pi_{\alpha}$, and $x''$ is a vertex of $\Pi_{-\beta}$. We
write $\varphi(z)$ to denote the speed of a $z\in \Pi$, and observe that the
generating vector is $v=\frac{x_2-x_4}{\|x_2-x_4\|}$.

\begin{figure}[h]
\epsfxsize =9 cm \centerline{\epsffile{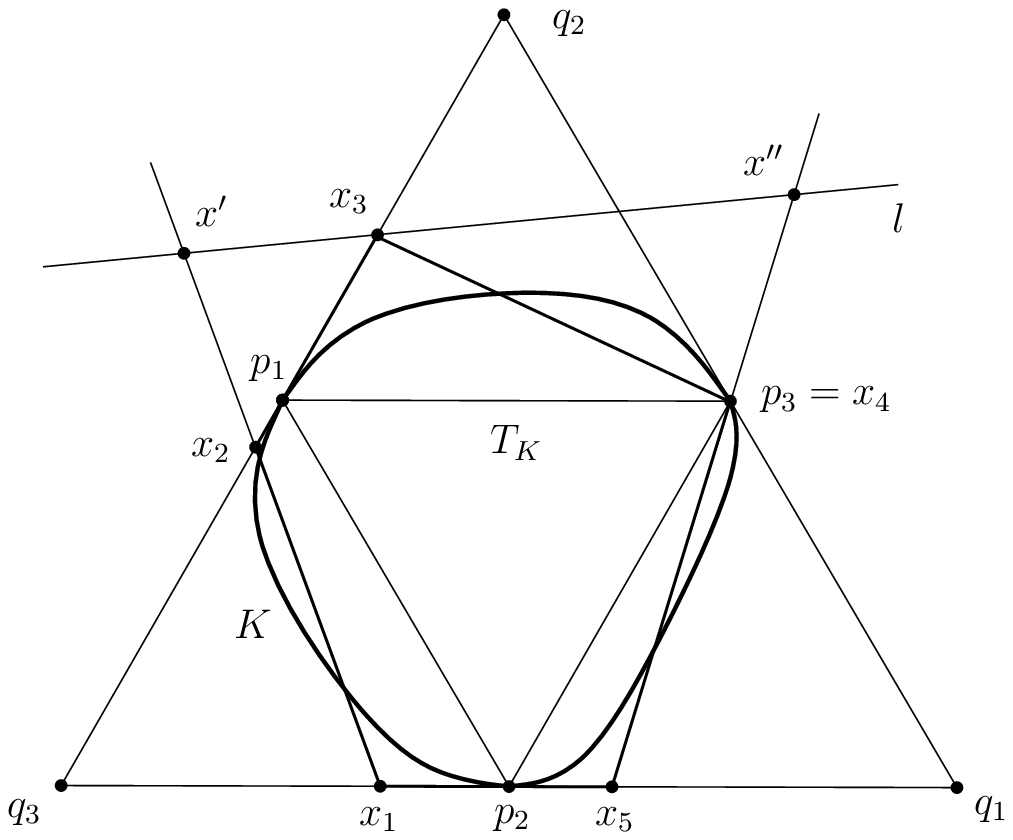}}
  \caption{}
  \label{triangle}
\end{figure}


It follows by \eqref{shadowcentroid} that
$\gamma(\Pi)=\beta\,\gamma(\Pi_{\alpha})+\alpha\,\gamma(\Pi_{-\beta})$. Thus
for any $z_1,z_2\in\Pi$, Theorem~\ref{shadow} yields
\begin{multline}\label{generalarea}
A([\gamma(\Pi),z_1,z_2])^p\leq \beta
A([\gamma(\Pi_{\alpha}),z_1+\alpha\varphi(z_1)v,z_2+\alpha\varphi(z_2)v])^p\\
+\alpha
A([\gamma(\Pi_{-\beta}),z_1-\beta\varphi(z_1)v,z_2-\beta\varphi(z_2)v])^p.
\end{multline}

In order to obtain a stability statement, we improve on \eqref{generalarea}. As
a first step, we localise $\gamma(\Pi_t)$. The centroid $\gamma(\Pi)$ has the
property that $-1/3 \,(\Pi-\gamma(\Pi))\subset \Pi-\gamma(\Pi)$. It
follows by \eqref{xp1} and \eqref{xipj} that
\begin{equation}
\label{bPi} \gamma(\Pi)\in 2\varepsilon T_K.
\end{equation}
We note that by \eqref{xipj}, $|\varphi(z)|\leq 1.1$  for
$z\in \Pi$, and $\varphi(z)=0$ if $z$ is separated from $x=x_3$ by the diagonal
$[x_2,x_4]$. Thus \eqref{shadowcentroid} yields
\begin{equation}
\label{bPit} \gamma(\Pi_t)=\gamma(\Pi)+t\omega v,\mbox{ \ for
$\omega\in(0,2\varepsilon)$ independent of $t$.}
\end{equation}

As $[p_1,p_3]$ is close to $l$ (any $z\in[p_1,p_2]$ is of distance at most
$3\varepsilon$ from~$l$) and  $[p_1,p_2]$ is close to $[x_2,x_1]$, we may
choose $\varepsilon_0$ small enough to ensure $\varepsilon/12\leq \|x-x'\|\leq
2\varepsilon$. In addition, $[x_4,x_5]$ is either contained in $[q_2,q_1]$, or
it is close to $[p_3,p_2]$, therefore $2/3\leq \|x-x''\|\leq 3/2$. We deduce
\begin{equation}
\label{alpha} \varepsilon/24\leq \alpha\leq 4\varepsilon.
\end{equation}

We may assume that $\R=v^\bot$, oriented in a way such that $\pi_vp_3>0$. We
observe that
$$
\frac1{2\sqrt{3}}-\varepsilon<\pi_vp_3\leq\frac1{2\sqrt{3}}\leq\pi_vp_1
<\pi_vx_3-\frac{\varepsilon}{12}.
$$
For $y\in\pi_v{\rm int}\Pi$ and $t\in[-\beta,\alpha]$, we write $\sigma_t(y)$
to denote the chord of $\Pi_t$ parallel to $v$ and projecting into $y$, and
$m_t(y)$ to denote the midpoint of $\sigma_t(y)$. In particular,
$\sigma_t(y)=\sigma_0(y)$ if $y\leq\pi_vx_2$.
 If $\varepsilon_0$ is
small enough then for any $s\in(0,\frac18)$,
$$
|\langle v,m(\pi_vp_3-s)\rangle|\leq 2s \mbox{ and
$V_1(\sigma(y))>\frac{\sqrt{3}}2$.}
$$
We consider the  intervals
$$
\mbox{$I_1=[\pi_vp_3-\frac1{16},\pi_vp_3-\frac1{32}]$ and
$I_2=[\frac1{16}\pi_vp_1+\frac{15}{16}\pi_vx_3,
\frac1{32}\pi_vp_1+\frac{31}{32}\pi_vx_3]$},
$$
and hence \eqref{xp1} yields
\begin{equation}
\label{Ilength} \mbox{$V_1(I_1)=\frac1{32}$ and $V_1(I_2)\geq
\frac{\varepsilon}{12\cdot 64}$}.
\end{equation}
In addition, $\sigma_t(y)=\sigma_0(y)$ if $y\in I_1$ and $t\in[-\beta,\alpha]$.
To ensure the condition (\ref{intcond}) in Lemma~\ref{steinerstab}, for $y\in
I_1$, we restrict our attention to
$$
\sigma^*_t(y)=\mbox{$\frac18$}(\sigma_t(y)-m_t(y))+m_t(y).
$$
Our  main claim is that there exists an absolute constant $c_1>0$, such that
for any $y_1\in I_1$ and $y_2\in I_2$, the integral
\[
f(t)=\int_{\sigma^*_t(y_1)}\int_{\sigma_t(y_2)}
A([\gamma(\Pi_t),z_1,z_2])^p\,dz_1dz_2
\]
satisfies
\begin{equation}
\label{segmentstab} \alpha f(-\beta)+\beta f(\alpha)\geq f(0)+c_1^p\varepsilon.
\end{equation}
It follows by \eqref{bPi} and \eqref{bPit} that if $\varepsilon_0$ is small
enough, then there exists a $\tau\in(\frac14,\frac34)$, such that
$\gamma(\Pi_{-\tau})$, $m_{-\tau}(y_1)$ and $m_{-\tau}(y_2)$ are collinear.
Writing $\omega_t$ to denote the intersection point of ${\rm
aff}\{\gamma(\Pi_t),m_t(y_1)\}$ and ${\rm aff}\sigma_t(y_2)$, the function $\la
v,\omega_t- m_t(y_2)\ra$ of $t$ is linear, zero at $-\tau$, and satisfies
\[
\mbox{$\langle v,\omega_{\alpha}- m_{\alpha}(y_2)\rangle\geq\frac18$ and
$\langle v,\omega_{-\beta}- m_{-\beta}(y_2)\rangle\leq -\frac18$}.
\]
We deduce by Lemma~\ref{steiner} and \eqref{translateo} that $f(t)$ is convex,
and has its minimum at $-\tau$. Thus Lemma~\ref{steinerstab} yields
$$
f(\alpha),f(-\beta)\geq f(-\tau)+c_2^p \mbox{ \ for an absolute constant
$c_2>0$.}
$$
It follows by $\beta=1-\alpha$ and \eqref{alpha} that
\begin{align*}
\alpha f(-\beta)+\beta f(\alpha)-f(0)  &\geq \alpha f(-\beta)+\beta f(\alpha)-
\mbox{$\frac{\alpha}{\alpha+\tau}\,f(-\tau)-
\frac{\tau}{\alpha+\tau}\,f(\alpha) $}\\
&=\alpha f(-\beta)+
\mbox{$\alpha\cdot\frac{1-\alpha-\tau}{\alpha+\tau}\,f(\alpha)
-\frac{\alpha}{\alpha+\tau}\,f(-\tau)$}\\
&\geq \mbox{$\frac{\alpha}{\alpha+\tau}\cdot c_2^p\geq
\frac{c_2^p}{24}\cdot\varepsilon$}.
\end{align*}
Therefore we have verified \eqref{segmentstab}. In turn combining this with
\eqref{generalarea} and \eqref{Ilength} proves for a suitable absolute constant
$c_3>0$, that
$$
\E^p_*(\Pi)+c_3^p\varepsilon^2\leq
\beta\E^p_*(\Pi_\alpha)+\alpha\E^p_*(\Pi_{-\beta})\leq
\max\{\E^p_*(\Pi_\alpha),\E^p_*(\Pi_{-\beta})\}.
$$
Applying subsequent basic systems to the one  of $\Pi_\alpha$ and
$\Pi_{-\beta}$ with larger $\E^p_*(\cdot)$, we conclude
$$
\E^p_*(K)+c_3^p\varepsilon^2\leq\E^p_*(\Pi)+c_3^p\varepsilon^2\leq \E^p_*(T^2).
$$

Turning to $\E^p_3(K)$, the major difference of the argument is that we need a
third interval for the third vertex of the triangle. Writing $I_1=[a,b]$, we
define $\widetilde{I}_2=I_2$, and
$$
\mbox{$\widetilde{I}_0=a+\frac1{10}(I_1-a)$ and
$\widetilde{I}_1=b+\frac1{10}(I_1-b)$}.
$$
In addition, we shorten $\sigma^*_t(y)$ for $y\in I_1$ to
$$
\tilde{\sigma}_t(y)=\mbox{$\frac1{80}$}(\sigma_t(y)-m_t(y))+m_t(y).
$$
We change our  main claim \eqref{segmentstab} to the following. There exists an
absolute constant $c_4>0$, such that for any $y_0\in \widetilde{I}_0$, $y_1\in
\widetilde{I}_1$ and $y_2\in \widetilde{I}_2$, the integral
$$
\tilde{f}(t)=
\int_{\tilde{\sigma}_t(y_0)}\int_{\tilde{\sigma}_t(y_1)}\int_{\sigma_t(y_2)}
A([z_0,z_1,z_2])^p\,dz_0dz_1dz_2
$$
satisfies
\begin{equation}
\label{segmentstabtri} \alpha \tilde{f}(-\beta)+\beta \tilde{f}(\alpha)\geq
\tilde{f}(0)+c_4^p\varepsilon.
\end{equation}
Now the proof of Lemma~\ref{localmax} can be completed along the argument above
by introducing the obvious alterations.
\end{proof}

\begin{coro}
\label{globalmax} There exists a positive absolute constant $\tilde{c}$ such
that if $p \geq 1$, and  $A(K)=(1+\varepsilon)A(T_K)$ for a convex disc $K$,
then
$$
\E^p_3(K)\leq(1-\tilde{c}^p\varepsilon^2)\E^p_3(T^2)
\mbox{ \ and \ }\E^p_*(K)\leq(1-\tilde{c}^p\varepsilon^2)\E^p_*(T^2).
$$
\end{coro}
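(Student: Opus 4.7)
The plan is to bootstrap Lemma~\ref{localmax} from the small-$\varepsilon$ regime to every admissible $\varepsilon$, exploiting the a priori bound $\varepsilon<3$ (which comes from $A(K)<4A(T_K)$). I would set $\tilde c = \hat c\,\varepsilon_0^2/144$, where $\hat c$ and $\varepsilon_0$ are the absolute constants produced by Lemma~\ref{localmax}, and split the argument into two regimes.

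If $\varepsilon\leq\varepsilon_0$, Lemma~\ref{localmax} directly yields $\E^p_*(K)\leq(1-\hat c^p\varepsilon^2)\E^p_*(T^2)$, which implies the claim since $\tilde c\leq\hat c$, and likewise for $\E^p_3$. The real content lies in the regime $\varepsilon>\varepsilon_0$, where I would instead establish an $\varepsilon$-uniform gap
\[
\E^p_*(K)\leq\bigl(1-\hat c^p\varepsilon_0^2/16\bigr)\E^p_*(T^2)
\]
(and the analogous bound for $\E^p_3$). From this the corollary follows because $\tilde c^p\varepsilon^2\leq 9\tilde c^p\leq \hat c^p\varepsilon_0^2/16$ for every $p\geq 1$: the binding case $p=1$ is equality by the choice of $\tilde c$, while for $p>1$ one uses $(\varepsilon_0^2/144)^{p-1}\leq 1$.

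To produce the uniform gap, I would approximate $K$ by polygons $\Pi_n$ with $A(\Pi_n)=A(K)$ converging to $K$ in Hausdorff distance, so that $\varepsilon(\Pi_n)>\varepsilon_0/2$ for all large $n$. To $\Pi_n$ I would apply basic systems iteratively: at each step the current polygon has at least four vertices, and \eqref{basicdef} guarantees a direction in which $\E^p_*$ (resp.\ $\E^p_3$) is strictly increasing, which I follow. Because each extremal basic system decreases the vertex count by one, the chain terminates at a triangle after finitely many steps. Along the concatenated continuous path, $\E^p_*$ is monotone non-decreasing, while $\varepsilon(t)=A(\Pi(t))/A(T_{\Pi(t)})-1$ is continuous in $t$ (the area is preserved by each basic system, and the maximal-area inscribed triangle varies continuously in the Hausdorff metric). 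Since $\varepsilon(t)$ starts above $\varepsilon_0/2$ and ends at $0$, the intermediate value theorem produces a polygon $\Pi_n^*$ on the path with $\varepsilon(\Pi_n^*)=\varepsilon_0/4$. Lemma~\ref{localmax} applied to $\Pi_n^*$ gives $\E^p_*(\Pi_n^*)\leq(1-\hat c^p\varepsilon_0^2/16)\E^p_*(T^2)$, and monotonicity forces $\E^p_*(\Pi_n)\leq\E^p_*(\Pi_n^*)$. Passing to the limit using Hausdorff-continuity of $\E^p_*$ delivers the desired uniform gap for $K$.

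The main obstacle is the coordinated bookkeeping of the basic-system chain: at each step one must invoke \eqref{basicdef} to pick the direction of strict increase of $\E^p$, verify that $\varepsilon(t)$ remains continuous at the junctions between successive basic-system segments, and monitor the vertex count to guarantee the chain reaches a triangle. Once these are arranged, the interior point $\Pi_n^*$ furnished by the IVT converts the local estimate of Lemma~\ref{localmax} into the $\varepsilon$-uniform estimate needed to close the corollary; the polygonal approximation of $K$ and convergence of the associated functionals are routine.
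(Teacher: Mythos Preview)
Your proposal is correct and follows essentially the same route as the paper: reduce to polygons by continuity, run a chain of basic systems that strictly increases $\E^p$ while deforming the polygon toward a triangle, use the intermediate value theorem on $\varepsilon(t)$ to stop at a polygon where Lemma~\ref{localmax} applies, and close using the a priori bound $\varepsilon<3$. The paper is slightly leaner in that it passes to a polygon at the outset and stops the chain exactly at $\varepsilon_0$, whereas you approximate first and stop at $\varepsilon_0/4$; but this is a cosmetic difference and your explicit choice of $\tilde c$ is a nice addition.
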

\begin{proof} We present the argument only for $\E^p_3(K)$.
Let $\hat{c}$ and  $\varepsilon_0$ come from Lemma~\ref{localmax}. We may
assume that $K$ is an $m$-gon for $m\geq 4$ by continuity, and that
$A(K)>(1+\varepsilon_0)A(T_K)$ by Lemma~\ref{localmax}. It follows by
\eqref{basicdef}, that there exist $m-3$ consecutive basic systems that induce
a continuous deformation of $K$ into a triangle in a way such that
$\E^p_3(\cdot)$ is strictly increasing during the deformation. Therefore there
exists a polygon $K'$ such that $\E^p_3(K')>\E^p_3(K)$, and
$A(K')=(1+\varepsilon_0)A(T_{K'})$. Now we apply Lemma~\ref{localmax} to $K'$,
and using $\varepsilon<3$, we deduce
\[
\E^p_3(K)<\E^p_3(K')\leq(1-\hat{c}^p\varepsilon_0^2)\E^p_3(T^2)<
\mbox{$(1-\frac{\hat{c}^p\varepsilon_0^2}9\cdot\varepsilon^2)\E^p_3(T^2)$}.
\qedhere
\]
\end{proof}
Having Corollary~\ref{globalmax}, Theorem~\ref{triangleoptstab} is a
consequence of the following.

\begin{lemma}
\label{areaBM} If $\delta_{BM}(K,T^2)=1+\delta$ for  a convex
disc $K$, then
$$
(1+\delta) A(T_K)\leq A(K)< (1+\delta)^2 A(T_K).
$$
\end{lemma}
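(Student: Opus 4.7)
The two parts of the inequality are of rather different character: the upper bound is an immediate consequence of the existence of a Banach--Mazur sandwich, while the lower bound requires producing an explicit positively-homothetic sandwich built directly out of $T_K$, together with a matching area estimate.

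\textbf{Upper bound.} Unpacking $\delta_{\rm BM}(K, T^2) = 1+\delta$ yields two positively-homothetic triangles (same orientation, linear homothety ratio $1+\delta$) $T_1 \subset K \subset T_2$ with $A(T_2) = (1+\delta)^2 A(T_1)$. Since $T_1$ is a triangle inscribed in $K$, $A(T_1) \leq A(T_K)$; since $T_2$ circumscribes $K$, $A(T_2) \geq A(K)$. Chaining gives $A(K) \leq (1+\delta)^2 A(T_K)$. Equality would force $A(T_1) = A(T_K)$ and $K = T_2$, so $K$ would itself be a triangle and $\delta$ would equal $0$; hence for $\delta > 0$ the inequality is strict.

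\textbf{Lower bound.} By affine invariance I normalize $T_K$ to the standard right triangle $\{(x,y) : x, y \geq 0,\ x+y \leq 1\}$, so $A(T_K) = 1/2$. The property noted before Proposition~\ref{trianglehexagon} --- that the triangle $\widetilde{T}$ whose side-midpoints are the vertices of $T_K$ contains $K$ --- reads in this chart as $K \subset \widetilde{T} = \{x \leq 1,\ y \leq 1,\ x+y \geq 0\}$. Set $\xi_1 = \min_K x$, $\xi_2 = \min_K y$, $\xi_3 = \max_K (x+y)$; the inclusions $T_K \subset K \subset \widetilde{T}$ force $\xi_1, \xi_2 \in [-1, 0]$ and $\xi_3 \in [1, 2]$. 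The triangle
\[
T_K^{+} = \{(x,y) : x \geq \xi_1,\ y \geq \xi_2,\ x+y \leq \xi_3\}
\]
is a positively-homothetic copy of $T_K$ with linear ratio $\lambda := \xi_3 - \xi_1 - \xi_2$, and it circumscribes $K$ by construction. Thus $(T_K, T_K^{+})$ is a valid Banach--Mazur sandwich for $K$, giving $1 + \delta \leq \lambda$.

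\textbf{Main obstacle.} The substance of the argument is the remaining inequality $\lambda \leq A(K)/A(T_K) = 2 A(K)$. I would split $K \setminus T_K$ into the three pieces
\[
R_1 = K \cap \{x < 0\}, \quad R_2 = K \cap \{y < 0\}, \quad R_3 = K \cap \{x+y > 1\},
\]
which are pairwise disjoint thanks to $K \subset \widetilde{T}$. For each piece I extract a concrete inscribed triangle: taking a point in $K$ that realizes $\xi_1$ and forming the convex hull with $(0,0),(0,1) \in T_K$ gives, by convexity of $K$, a triangle of area $-\xi_1/2$ contained in the closure of $R_1$, so $A(R_1) \geq -\xi_1/2$; symmetrically $A(R_2) \geq -\xi_2/2$, and using the side from $(1,0)$ to $(0,1)$ one obtains $A(R_3) \geq (\xi_3-1)/2$. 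Summing,
\[
A(K) = A(T_K) + \sum_{i=1}^{3} A(R_i) \geq \frac{1}{2} + \frac{-\xi_1 - \xi_2 + \xi_3 - 1}{2} = \lambda\, A(T_K),
\]
and combined with $1+\delta \leq \lambda$ this yields $(1+\delta) A(T_K) \leq A(K)$. The delicate point is that the three excess widths dual to the three sides of $T_K$ assemble exactly into the homothety ratio $\lambda$, so that the area bound matches the sandwich ratio on the nose.
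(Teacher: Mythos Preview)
Your proof is correct and follows essentially the same approach as the paper's. Both arguments obtain the upper bound from a Banach--Mazur sandwich of homothetic triangles, and for the lower bound both push each side of $T_K$ outward to its supporting position for $K$, observe that the resulting homothety ratio equals one plus the sum of the three excess widths, and bound the area of each flap of $K\setminus T_K$ from below by the triangle on the corresponding side of $T_K$ with apex at an extremal point of $K$. The only difference is cosmetic: the paper normalises $T_K$ to be equilateral with unit side and phrases the excesses as distances $t_i$, whereas you normalise to a right triangle and use the linear functionals $x$, $y$, $x+y$; your treatment also makes the strictness of the upper bound explicit.
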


\begin{proof} The upper bound is consequence of the fact that by the definition of the
Banach-Mazur distance,
 there exists a triangle $T'\subset K$, and $x\in T'$,
such that $K\subset (1+\delta)(T'-x)+x$. For the lower bound,
we may assume that $T_K$ is a regular triangle of edge length one.
Let $p_1,p_2,p_3$ be the vertices of $T_K$, and let
$q_1,q_2,q_3$ be the such that $p_i$ is the midpoint of $[q_j,q_k]$,
$\{i,j,k\}=\{1,2,3\}$.
If
$\{i,j,k\}=\{1,2,3\}$, then let $t_i$ be the maximal distance of points of
$K\cap[q_i,p_j,p_k]$ from $[p_j,p_k]$. On the one hand,
\[
A(K)\geq A(T_K)+(t_1+t_2+t_3)/2= \mbox{$(1+\frac2{\sqrt{3}}(t_1+t_2+t_3))$}A(T_K).
\]
On the other hand, $K$ is contained in a regular triangle, that is similarly
situated to $T_K$, and whose height is $\frac{\sqrt{3}}2+t_1+t_2+t_3$. It follows
that
\[
1+\delta\leq \mbox{$(\frac{\sqrt{3}}2+t_1+t_2+t_3)/\frac{\sqrt{3}}2 =
1+\frac2{\sqrt{3}}(t_1+t_2+t_3)$}\leq A(K)/A(T_K). \qedhere
\]
\end{proof}
That the exponent $2$ in the error term $\delta^2$ is optimal is shown by the
example of the closure of $T\backslash \delta T$, where $T$ is a triangle such
that $o$ is a vertex.

\section{Stability of Petty projection inequality}\label{pettystab}

Theorem~\ref{ellipsoid} readily implies the stability version of the
Busemann-Petty centroid inequality \eqref{busepetty}, using
\eqref{centroidbody}. Here we also derive the stability version of Petty's
projection inequality (cf. \cite{Lut93}). Given a convex body $K$, its {\em
projection body} $\Pi K$ is defined by its support function
\[
h_{\Pi K}(u) = V_{d-1}(p_u(K)).
\]
 The Petty
projection inequality states that the quantity
\[
V_d(K)^{d-1} V_d (\Pi^*(K))
\]
is maximised for ellipsoids. Citing formula (5.7) of \cite{Lut93} and using
\eqref{centroidbody}, we arrive to that if $V_d(K)=1$, then
\begin{equation}\label{projcentr}
\frac{1}{V_d(K)^{d-1}V_d(\Pi^* K)} \geq \left(\frac {d+1}{2}\right)^d \frac{V_d
(\Gamma (\Pi^*K))}{V_d (\Pi^* K)} = (d+1)^d \E^1_o (\Pi^*K).
\end{equation}
Let $\delta_{BM}(K, B^d) = 1 + \delta$. Bourgain and Lindenstrauss \cite{BoL88}
proved that there exists a constant $C$ depending on $d$, so that
\[
\delta_{BM}(\Pi K, B^d) \geq 1+ C \delta^{(d^2 + 5d)/2}.
\]
Referring to  $\delta_{BM}(\Pi K, B^d) = \delta_{BM}(\Pi^* K, B^d)$,
Theorem~\ref{ellipsoid} implies that there exists a constant $c$ depending on
$d$ only, so that
\[
\E^1_o (\Pi^*K) \geq (1 + C' \delta^{d(d+3)(d+5)/2})\E^1_o(B^d).
\]
Thus, from \eqref{projcentr} we obtain  that
\[
V_d(K)^{d-1}V_d(\Pi^* K) \leq (1+c \delta^{d(d+3)(d+5)/2})^{-1}
V_d(B^d)^{d-1}V_d(\Pi^* B^d).
\]

We note that the stability version of the Busemann intersection inequality
\eqref{busemanninter} would also follow by verifying a statement of the
following type. If $K$ is a convex body in $\R^d$, and $\delta_{BM} (K, B^d) =
1 + \delta$ for some $\delta>0$, then there exist $\nu, \eta >0$ (depending on
$\delta$) so that $\delta_{BM} (K \cap u^\perp, B^{d-1}) > 1+ \eta$ for a set
of directions $u$ of measure at least~$\nu$. The enthusiast would believe in
such a statement with an absolute constant $\nu$ and $\eta = \delta^q$ for some
$q
>0$.

\section{Acknowledgements}

We would like to thank Imre B\'ar\'any for drawing our attention to the problem and
for useful comments, and Ferenc Fodor, Vitali Milman and Alain Pajor for the
valuable suggestions and enlightening discussions.

\end{document}